\font \eightrm=cmr8
\font\cyr=wncyr10
\newcommand{\nc}{\newcommand}
\newtheorem{thm}{Theorem}
\newtheorem{exam}{Example}
\newtheorem{cor}[thm]{Corollary}
\newtheorem{lem}[thm]{Lemma}
\newtheorem{prop}[thm]{Proposition}
\newtheorem{rmk}[thm]{Remark}
\def\sha{{\mbox{\cyr X}}}
\nc{\ignore}[1]{{}}
\nc{\mrm}[1]{{\rm #1}}
\nc{\dirlim}{\displaystyle{\lim_{\longrightarrow}}\,}
\nc{\invlim}{\displaystyle{\lim_{\longleftarrow}}\,}
\nc{\vep}{\varepsilon} \nc{\ep}{\epsilon}
\nc{\sigmat}{\widetilde\sigma}
\nc{\ostar}{\overline{*}}
\nc{\mchar}{\mrm{Char}} \nc{\Hom}{\mrm{Hom}} \nc{\id}{\mrm{id}}
\nc{\remark}{\noindent{\bf{Remark:}}}
\nc{\remarks}{\noindent{\bf{Remarks:}}}
 \nc{\delete}[1]{}
 \nc{\grad}[1]{^{({#1})}}
 \nc{\fil}[1]{_{#1}}
\nc{\BA}{{\Bbb A}} \nc{\CC}{{\Bbb C}} \nc{\DD}{{\Bbb D}}
\nc{\EE}{{\Bbb E}} \nc{\FF}{{\Bbb F}} \nc{\GG}{{\Bbb G}}
\nc{\HH}{{\Bbb H}} \nc{\LL}{{\Bbb L}} \nc{\NN}{{\Bbb N}}
\nc{\PP}{{\Bbb P}} \nc{\QQ}{{\Bbb Q}} \nc{\RR}{{\Bbb R}}
\nc{\TT}{{\Bbb T}} \nc{\VV}{{\Bbb V}} \nc{\ZZ}{{\Bbb Z}}
\nc{\Cal}[1]{{\mathcal {#1}}}
\nc{\mop}[1]{\mathop{\hbox {\rm #1} }}
\nc{\smop}[1]{\mathop{\hbox {\eightrm #1} }}
\nc{\mopl}[1]{\mathop{\hbox {\rm #1} }\limits}
\nc{\frakg}{{\frak g}}
\nc{\g}[1]{{\frak {#1}}}
\def \restr#1{\mathstrut_{\textstyle |}\raise-8pt\hbox{$\scriptstyle #1$}}
\def \srestr#1{\mathstrut_{\scriptstyle |}\hbox to
  -1.5pt{}\raise-4pt\hbox{$\scriptscriptstyle #1$}}
\nc{\wt}{\widetilde} \nc{\wh}{\widehat}
\nc{\redtext}[1]{\textcolor{red}{#1}}
\nc{\bluetext}[1]{\textcolor{blue}{#1}}
\nc\fleche[1]{\mathop{\hbox to #1 mm{\rightarrowfill}}\limits}
\def\semi{\mathrel{\times}\kern -.85pt\joinrel\mathrel{\raise 1.4pt\hbox{${\scriptscriptstyle |}$}}}
\begin{document}

\title[New identities in dendriform algebras]
      {New identities in dendriform algebras}

\author{Kurusch Ebrahimi-Fard}
\address{Max-Planck-Institut f\"ur Mathematik,
         Vivatsgasse 7,
         D-53111 Bonn, Germany.}
 \email{kurusch@mpim-bonn.mpg.de}
 \urladdr{http://www.th.physik.uni-bonn.de/th/People/fard/}

\author{Dominique Manchon}
\address{Universit\'e Blaise Pascal,
         C.N.R.S.-UMR 6620,
         63177 Aubi\`ere, France}
         \email{manchon@math.univ-bpclermont.fr}
         \urladdr{http://math.univ-bpclermont.fr/~manchon/}

\author{Fr\'ed\'eric Patras}
\address{Laboratoire J.-A. Dieudonn\'e
         UMR 6621, CNRS,
         Parc Valrose,
         06108 Nice Cedex 02, France}
\email{patras@math.unice.fr} \urladdr{www-math.unice.fr/~patras}

\date{July 1, 2007\\
\noindent {\footnotesize{${}\phantom{a}$ 2001 PACS Classification:
03.70.+k, 11.10.Gh, 02.10.Hh}} }

\begin{abstract}
Dendriform structures arise naturally in algebraic combinatorics
(where they allow, for example, the splitting of the shuffle
product into two pieces) and through Rota--Baxter algebra
structures (the latter appear, among others, in differential
systems and in the  renormalization process of pQFT). We prove new
combinatorial identities in dendriform dialgebras that appear to
be strongly related to classical phenomena, such as the
combinatorics of Lyndon words, rewriting rules in Lie algebras, or
the fine structure of the Malvenuto--Reutenauer algebra. One of
these identities is an abstract noncommutative, dendriform,
generalization of the Bohnenblust--Spitzer identity and of an
identity involving iterated Chen integrals due to C.S.~Lam.
\end{abstract}

\maketitle



\section{Introduction}
\label{sect:introd}

Many associative algebras arising from combinatorial constructions
have a nice property: their product can be splitted into two
components that behave nicely with respect to the original
product. The best known example of this phenomenon is given by
Ree's recursive definition and study of the shuffle product
\cite{ree57}. The identities underlying this splitting, now called
dendriform identities, can be traced back to the work of
M.P.~Sch\"utzenberger on the properties of Hall basis of free Lie
algebras \cite{sch58}\footnote{The third author thanks
Ch.~Reutenauer for communication of \cite{sch58}.}. However, in
spite of Sch\"utzenberger's seminal ideas, and of the regular use
of the shuffle product --splitting--, e.g. in combinatorics or
algebraic topology, dendriform structures were not investigated
for their own till recently.

The situation has changed and, as explained below, dendriform
algebras have risen a considerable interest. The purpose of the
present article is to derive new dendriform identities and to
study their applications to classical problems and structures in
algebraic combinatorics, such as rewriting rules in free Lie
algebras, properties of Lyndon words in relation with free Lie
algebra basis, or Dynkin-type identities in the
Malvenuto--Reutenauer algebra.

In abstract terms, a dendriform algebra is an algebra with left
and right commuting representations on itself, written $\prec$ and
$\succ$, such that $x(y) = x \succ y$ and $(y)x = y \prec x$. The
two actions add to form the product of the algebra. In the case of
a commutative algebra, as an extra axiom, the left and right
actions are further required to identify canonically (so that $x
\succ y = y \prec x$, the particular case investigated in depth by
Sch\"utzenberger in~\cite{sch58}).

J.-L. Loday recently formalized this structure by introducing
so-called dendriform identities in connection with dialgebra
structures. Free dendriform algebras were described in terms of
trees in \cite{Lod01} (in fact, free commutative dendriform
algebras had been described in \cite{sch58}). Following the work
of M.~Aguiar~\cite{Ag00}, the first author of the present article
constructed then a forgetful functor from associative Rota--Baxter
algebras to dendriform algebras, as well as various forgetful
functors from dendriform algebras to other types of algebras
\cite{E}. Since Rota--Baxter algebras arise in many mathematical
contexts (such as fluctuation theory, integral and finite
differences calculus or perturbative renormalization in quantum
field theory), this construction provides the theory of dendriform
structures with a whole variety of new examples, besides the
classical ones arising from shuffle algebras (such as the
classical shuffle algebra or the algebras of singular cochains in
algebraic topology). This discovery was one of the leading
motivation of the present article, that extends to the dendriform
context ideas that have been developed by the authors, partly with
J.M.~Gracia-Bond\'{\i}a, in the setting of Rota--Baxter
algebras~\cite{EGP07,EMP07}, and that generalize to the
noncommutative Rota--Baxter and dendriform setting classical
results such as the Bohnenblust--Spitzer formula of fluctuation
theory~\cite{Rota} or Lam's identities for iterated integrals and
solutions of first order linear differential equations~\cite{Lam}.

Other results should be quoted here that have contributed to the
development of the theory of dendriform structures. F.~Chapoton~\cite{chapoton2000} (resp. M.~Ronco
\cite{ronco2002})
discovered that the classical proof of the Cartier--Milnor--Moore
theorem~\cite{mm65} (respectively its modern combinatorial proof
\cite{pat94}) could be extended to bialgebras with a dendriform
structure, linking dendriform structures with other algebraic
structures such as brace and pre-Lie algebras. Aguiar established
in~\cite{Ag02} unexpected connections with the infinitesimal
bialgebra structures studied in \cite{Ag99,Ag01}. Another striking
result in the field, and a great recent achievement in algebraic
combinatorics, is due to L.~Foissy, who was able to prove the
Duchamp--Hivert--Thibon conjecture (the Lie algebra of primitive
elements of the Malvenuto--Reutenauer Hopf algebra is a free Lie
algebra) using another dendriform version of the
Cartier--Milnor--Moore theorem \cite{foi05}. Other applications to
algebraic combinatorics have been developed recently by F.~Hivert,
J.~Novelli and Y.~Thibon \cite{NT06,HNT07}. These various results,
together with the classical identities in free Lie algebras
arising from the combinatorics of shuffles and of Hall and Lyndon
basis, contributed strongly to motivate further the present
article, and to the applications considered below of identities in
dendriform algebras to questions in algebraic combinatorics.


\section{Operations on dendriform algebras}
\label{formulation}

In concrete terms, a {\sl dendriform algebra\/} (or dendriform dialgebra)~\cite{Lod01}
over a field $k$ is a $k$-vector space $A$ endowed with two
bilinear operations $\prec$ and $\succ$ subject to the three
axioms below:
\begin{eqnarray}
 (a \prec b) \prec c  &=& a \prec (b*c)        \label{A1}\\
 (a \succ b) \prec c  &=& a \succ (b\prec c)   \label{A2}\\
  a \succ (b \succ c) &=& (a*b) \succ c        \label{A3},
\end{eqnarray}
where $a*b$ stands for $a \prec b + a \succ b$. These axioms
easily yield associativity for the law $*$. See~\cite{sch58} for
the commutative version, i.e. when furthermore $a \prec b = b \succ a$.

\begin{exam} The shuffle dendriform algebra. {\rm{The tensor algebra
$T(X)$ over an ordered alphabet is the linear span of the words
(or noncommutative monomials) $y_1 \ldots y_n$, $y_i \in X$ (we
will also use, when convenient, the notation $(y_1, \ldots ,y_n)$
for $y_1 \ldots y_n$). The concatenation product on $T(X)$ is
written by a dot: $y_1 \ldots y_n\ \cdot \ z_1 \ldots z_k:= y_1
\ldots y_n z_1 \ldots z_k$. The tensor algebra is provided recursively with a
dendriform algebra structure by the identities:
$$
    y_1 \ldots y_n \prec z_1 \ldots z_k := y_1 (y_2 \ldots y_n \prec z_1 \ldots z_k + y_2 \ldots y_n \succ z_1 \ldots z_k)
$$
$$
    y_1 \ldots y_n \succ  z_1 \ldots z_k := z_1(y_1 \ldots y_n \prec z_2 \ldots z_k + y_1 \ldots y_n \succ z_2 \ldots z_k)
$$
Of course, this is nothing but a rewriting of Ree's recursive
definition of the shuffle product $\sha$, to which the associative
product $\prec + \succ$ identifies \cite{ree57,sch58}.}}
\end{exam}

\begin{exam} \label{ex:MAX} The MAX dendriform algebra. {\rm{For any word
$w$ over the ordered alphabet $X$, let us write $max(w)$ for the
highest letter in $w$. The tensor algebra is provided with another
dendriform algebra structure by the identities:
$$
    u\succ v = u \cdot v\ \ \mbox{\rm if}\ \ max(u)<max(v) \ \ \mbox{\rm and}\ \ 0 \ \ \mbox{else}
$$
$$
    u\prec v = u \cdot v\ \ \mbox{\rm if}\ \ max(u) \geq max(v) \ \ \mbox{\rm and}\ \ 0 \ \ \mbox{else}
$$
where $u$ and $v$ run over the words over $X$. The associative
product $\prec + \succ$ identifies with the concatenation product.
MAX dendriform structures have appeared in the setting of
noncommutative generalizations of the algebra of symmetric
functions \cite{NT06,HNT07}.}}
\end{exam}

\begin{exam} The Malvenuto--Reutenauer dendriform
algebra. {\rm{Let us write ${\bf S}_\ast$ for the
Malvenuto--Reutenauer algebra, that is, the direct sum of the
group algebras of the symmetric groups ${\bf Q}[S_n]$, equipped
with the (shifted) shuffle product (written $\ast$):
$$
    \forall (\sigma , \beta )\in S_n \times S_m, \ \sigma\ast \beta :=(\sigma(1), \ldots ,\sigma(n))
                                                   \sha (\beta (1) + n, \ldots ,\beta (m)+n)
$$
The restriction to ${\bf S}_\ast$ of the dendriform structure on
the tensor algebra provides ${\bf S}_\ast$ with a dendriform
algebra structure:
$$
    \sigma \prec \beta := \sigma (1)\cdot ((\sigma(2), \ldots ,\sigma(n))\sha (\beta(1)+n, \ldots ,\beta(m)+n))
$$
$$
    \sigma \succ \beta :=(\beta (1)+n)\cdot ((\sigma(1), \ldots ,\sigma(n))\sha (\beta(2)+n, \ldots ,\beta(m)+n)).
$$
This structure is essentially the one used by Foissy to prove the
Duchamp--Hivert--Thibon conjecture \cite{foi05}.}}
\end{exam}

\begin{exam} Dendriform algebras of linear operators. {\rm{Let
$\mathcal A$ be any algebra of operator-valued functions on the
real line, closed under integrals $\int_0^x$. One may wish to
consider, for example, smooth $n \times n$ matrix-valued
functions. Then, $\mathcal A$ is a dendriform algebra for the
operations:
$$
    A \prec B (x):=A(x)\cdot \int\limits_0^xB(t)dt
 \qquad
    A \succ B (x):=\int\limits_0^xA(t)dt\cdot B(x)
$$
with $A,B \in \mathcal A$. This is a particular example of a
dendriform structure arising from a Rota--Baxter algebra
structure. We refer to the last section of the article for further
details on Rota--Baxter algebras and their connections to
dendriform algebras. Here, let us simply mention that the
Rota--Baxter operator on $\mathcal A$ giving rise to the
dendriform structure is: $R(A)(x):=\int\limits_0^xA(t)dt$.
Aguiar~\cite{Ag00} first mentioned the link between (weight zero)
Rota--Baxter maps and dendriform algebras.}}
\end{exam}

Besides the three products $\prec,\succ,\ast$, dendriform algebras
carry naturally other operations. The most interesting, for our
purposes, are the bilinear operations $\rhd$ and $\lhd$ defined
by:
\begin{equation}
\label{def:prelie}
    a \rhd b:= a\succ b-b\prec a,
    \hskip 12mm
    a \lhd b:= a\prec b-b\succ a
\end{equation}
that are left pre-Lie and right pre-Lie, respectively, which means that
we have:
\begin{eqnarray}
    (a\rhd b)\rhd c-a\rhd(b\rhd c)&=& (b\rhd a)\rhd c-b\rhd(a\rhd c),\label{prelie}\\
    (a\lhd b)\lhd c-a\lhd(b\lhd c)&=& (a\lhd c)\lhd b-a\lhd(c\lhd b).
\end{eqnarray}
The associative operation $*$ and the pre-Lie operations $\rhd$,
$\lhd$ all define the same Lie bracket:
\begin{equation}
    [a,b]:=a*b-b*a=a\rhd b-b\rhd a=a\lhd b-b\lhd a.
\end{equation}
\begin{diagram}
    {\rm{dendriform\ alg.}}  & \rTo^{\phantom{m} \lhd,\ \rhd \phantom{m}} & {\rm{pre-Lie\ alg.}}\\
                  \dTo^{*}   &                                            & \dTo_{ [-,-]}\\
    {\rm{associative\ alg.}} & \rTo^{\phantom{m} [-,-] \phantom{m}}       & {\rm{Lie\ alg.}}\\
\end{diagram}

\vspace{0.4cm}

We recursively define on $(A,\prec,\succ)$, augmented by a unit
$1$:
$$
    a \prec 1 := a =: 1 \succ a
    \hskip 12mm
    1 \prec a := 0 =: a \succ 1,
$$
implying $a*1=1*a=a$, the following set of elements for a fixed $x
\in A$:
 \allowdisplaybreaks{
\begin{eqnarray*}
    w^{(0)}_{\prec}(x)&=&w^{(0)}_{\succ}(x)=1,\\
    w^{(n)}_{\prec}(x) &:=& x \prec \bigl(w^{(n-1)}_\prec(x) \bigr),\\
    w^{(n)}_{\succ}(x) &:=& \bigl(w^{(n-1)}_\succ(x)\bigr)\succ x.
\end{eqnarray*}}
Let us recall from Chapoton and
Ronco~\cite{chapoton2000,ronco2000,ronco2002} that, in the free
dendriform dialgebra on one generator $a$, augmented by a
unit element, there is a Hopf algebra structure with respect to
the associative product $*$. The elements
$w^{(n)}_{\succ}:=w^{(n)}_{\succ}(a)$ generate a cocommutative
graded connected Hopf subalgebra $(H,*)$ with
coproduct:
$$
    \Delta(w^{(n)}_{\succ})=w^{(n)}_{\succ} \otimes 1 + 1 \otimes w^{(n)}_{\succ}
    + \sum_{0<m<n} w^{(m)}_{\succ} \otimes w^{(n-m)}_{\succ},
$$
and antipode $S(w^{(n)}_{\succ})=(-1)^{n} w^{(n)}_{\prec}$. It is
actually an easy exercise to check that the $w^{(n)}_{\succ}$
generate a free associative subalgebra of the free dendriform
algebra on $a$ for the $*$ product, so that one can use the
previous formula for the coproduct action on $w^{(n)}_{\succ}$ as
a definition of the Hopf algebra structure on $H$. As an important
consequence, it follows that $H$ is isomorphic, as a Hopf algebra,
to the Hopf algebra of noncommutative symmetric functions
\cite{gelfand1995}

We also define the following set of iterated left and right
pre-Lie products~(\ref{def:prelie}). For $n>0$, let $a_1,\ldots
,a_n \in A$:
 \allowdisplaybreaks{
\begin{eqnarray}
    \ell^{(n)}(a_1,\dots,a_n) &:=&
    \Bigl( \cdots \bigl( (a_1 \rhd a_2) \rhd a_3 \bigr)
    \cdots \rhd a_{n-1} \Bigr) \rhd a_n
\label{leftRBpreLie}\\
    r^{(n)}(a_1,\dots,a_n) &:=&
    a_1 \lhd \Bigl( a_2 \lhd \bigl( a_3 \lhd
    \cdots (a_{n-1} \lhd a_n) \bigr) \cdots \Bigr).
\label{rightRBpreLie}
\end{eqnarray}}
For a fixed single element $a \in A$ we can write more compactly
for $n>0$:
 \allowdisplaybreaks{
\begin{eqnarray}
    \ell^{(n+1)}(a) = \bigl(\ell^{(n)}(a)\bigr)\rhd a
\quad\ {\rm{ and }} \quad\
    r^{(n+1)}(a) = a \lhd \bigl(r^{(n)}(a)\bigr)
\label{def:pre-LieWords}
\end{eqnarray}}
and $\ell^{(1)}(a):=a=:r^{(1)}(a)$.


\section{Dendriform power sums expansions}
\label{dpse}

In the following we would like
to address the theory of solutions of the following two equations
for a fixed $a \in A$:
\begin{equation}
\label{eq:prelie}
     X = 1 + ta \prec X,
    \hskip 12mm
     Y = 1 + Y \succ ta.
\end{equation}
in $A[[t]]$. Formal solutions to these equations are given by the
series of ``left and right non associative power sums'':
$$
    X = \sum_{n \geq 0} t^nw^{(n)}_{\prec}(a)
    \hskip 15mm {\rm{resp.}} \hskip 15mm
    Y = \sum_{n \geq 0} t^nw^{(n)}_{\succ}(a).
$$
Notice that, due to the definition of the Hopf algebra structure
on $ H$, these two series behave as group-like elements with
respect to coproduct $\Delta$ (up to the
extension of the scalars from $k$ to $k[t]$ and the natural
extension of the Hopf algebra structure on $H=\bigoplus_{n\ge
0}H_n$ to its completion $\hat H=\prod_{n\ge 0}H_n$ with respect
to the grading).

Recall now that the Dynkin operator is the linear endomorphism of the
tensor algebra $T(X)$ over an alphabet $X=\{x_1,\ldots
,x_n,\ldots\}$ into itself the action of which on words $y_1
\ldots y_n,\ y_i\in X$ is given by the left-to-right iteration of
the associated Lie bracket:
$$
    D(y_1,\dots,y_n) = [\cdots[[y_1, y_2], y_3]\cdots\!, y_n],
$$
where $[x,y]:=xy-yx$ \cite{reutenauer93}. The Dynkin operator is a
quasi-idempotent: its action on a homogeneous element of
degree~$n$ satisfies $D^2=nD$. The associated projector $D/n$
sends $T_n(X)$, the component of degree~$n$ of the tensor algebra,
to the component of degree~$n$ of the free Lie algebra over $X$.
The tensor algebra is a graded connected cocommutative Hopf
algebra, and it is natural to extend the definition of $D$ to any
such Hopf algebra as the convolution product of the antipode $S$
with the grading operator $N$: $D:=S \star N$
\cite{patreu2002,eunomia2006,EGP07,EMP07}. This applies in
particular in the dendriform context to the Hopf algebra $H$
introduced above. We will write $D_n$ for $D\circ p_n$, where
$p_n$ is the canonical projection from $T(X)$ (resp. $H$) to
$T_n(X)$ (resp. $H_n$).

\begin{lem}\label{lem:dynkin-prelie}
For any integer $n\ge 1$ and for any $a \in A$ we have:
\begin{equation}\label{eq:dynkin-prelie}
    D(w_{\succ}^{(n)}(a)) = {\ell}^{(n)}(a).
\end{equation}
\end{lem}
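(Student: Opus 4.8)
The plan is to package the statement into generating series and to identify the Dynkin image as the unique solution of the pre-Lie fixed-point equation satisfied by the left pre-Lie words. Setting $b=ta$, I would introduce (in the completed algebra with coefficients in $k[[t]]$) the three series $Y=\sum_{n\ge 0}t^n w^{(n)}_\succ(a)$, its antipode $\overline X:=S(Y)=\sum_{n\ge 0}(-t)^n w^{(n)}_\prec(a)$, and $Z:=N(Y)=t\partial_t Y=\sum_{n\ge 1}n\,t^n w^{(n)}_\succ(a)$. By definition of $D=S\star N$ and of the coproduct $\Delta$, the series collecting the left-hand sides is exactly $\mathcal D:=\sum_{n\ge 1}t^n D(w^{(n)}_\succ(a))=\overline X * Z$. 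On the right-hand side I would set $\Omega:=\sum_{n\ge 1}t^n\ell^{(n)}(a)$; by bilinearity of $\rhd$ and the recursion \eqref{def:pre-LieWords} one checks at once that $\Omega$ solves $\Omega=b+\Omega\rhd b$. Since $\rhd b$ raises the $t$-degree (equivalently the $H$-degree) by one, this equation has a unique solution without constant term ($\Omega_1=b$, $\Omega_{n+1}=\Omega_n\rhd b$), so it suffices to prove that $\mathcal D$ satisfies the same equation.

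The computational inputs are read off from \eqref{eq:prelie}. From $Y=1+Y\succ b$ I get $Y\succ b=Y-1$, and applying $t\partial_t$ to this relation yields $Z=Z\succ b+(Y-1)$, hence $Z\succ b=Z-(Y-1)$; from $\overline X=1-b\prec\overline X$ I get $b\prec\overline X=1-\overline X$; and since $S$ is the antipode, $\overline X * Y=1$. Keeping the augmentation rules $1\succ v=v$, $v\prec 1=v$, $1\prec v=v\succ 1=0$ in mind (so that all series in sight are genuine), I would then compute $\mathcal D\rhd b=\mathcal D\succ b-b\prec\mathcal D$ term by term. Axiom \eqref{A3} gives $\mathcal D\succ b=(\overline X*Z)\succ b=\overline X\succ(Z\succ b)=\overline X\succ\bigl(Z-(Y-1)\bigr)$, and axiom \eqref{A1} gives $b\prec\mathcal D=b\prec(\overline X*Z)=(b\prec\overline X)\prec Z=(1-\overline X)\prec Z=-\overline X\prec Z$.

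Expanding the first term over the (proper) arguments $Z$ and $Y-1$ and regrouping with $\mathcal D=\overline X * Z=\overline X\succ Z+\overline X\prec Z$, the two contributions combine to $\mathcal D\rhd b=\mathcal D-\overline X\succ(Y-1)$. The crux is the identity $\overline X\succ(Y-1)=b$: writing $Y-1=Y\succ b$ and using \eqref{A3} once more, $\overline X\succ(Y\succ b)=(\overline X*Y)\succ b=1\succ b=b$, where the last two equalities use $\overline X*Y=1$ and the unit rule $1\succ b=b$. Hence $\mathcal D\rhd b=\mathcal D-b$, i.e. $\mathcal D=b+\mathcal D\rhd b$, and by uniqueness $\mathcal D=\Omega$, which is the claimed identity read off degree by degree. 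I expect the only real difficulty to be bookkeeping: arranging the two dendriform reductions so that the stray unit terms cancel cleanly and the single closing application of \eqref{A3} to $(\overline X*Y)\succ b$ surfaces. Once the fixed-point reformulation is in place, axioms \eqref{A1} and \eqref{A3} together with $\overline X*Y=1$ do all the work.
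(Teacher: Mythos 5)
Your proof is correct, and every input you rely on is explicitly available in the paper: the coproduct formula (group-likeness of $Y$), the antipode formula $S(w^{(n)}_\succ)=(-1)^n w^{(n)}_\prec$, and the unit conventions. The difference from the paper's own proof is one of packaging rather than of substance, but it is a real difference in organization: the paper proves the lemma by induction on $n$, expanding $(S\star N)(w^{(n)}_\succ)$ coefficient-wise through the coproduct and manipulating the resulting sums until the recursion $D(w^{(n)}_\succ)=D(w^{(n-1)}_\succ)\rhd a$ emerges, whereas you prove exactly that recursion globally, in the form $\mathcal{D}=b+\mathcal{D}\rhd b$, and then replace the induction by a uniqueness argument for this pre-Lie fixed-point equation. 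In fact your ingredients match the paper's one-to-one: your relation $Z\succ b=Z-(Y-1)$ is the series form of the paper's Leibniz splitting $N(w^{(m)}_\succ)=N(w^{(m-1)}_\succ)\succ a+w^{(m-1)}_\succ\succ a$; your $b\prec\overline{X}=1-\overline{X}$ is the paper's $S(w^{(n)}_\succ)=-a\prec S(w^{(n-1)}_\succ)$; your $\overline{X}*Y=1$ plays the role of the paper's use of $S\star\mathrm{Id}$; and your applications of (\ref{A1}) and (\ref{A3}) correspond to the paper's mixed identity $x*(y\succ z)=(x*y)\succ z+x\prec(y\succ z)$ and its $\prec$-associativity step. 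What your route buys is the elimination of all index bookkeeping and a conceptually transparent statement --- the Dynkin image of the group-like series satisfies the very recursion that defines the $\ell^{(n)}$'s --- which is also consonant with Section 4 of the paper, where $D(Y)=Y^{-1}*(t\dot Y)$ is the starting point; what the paper's route buys is a self-contained, purely coefficient-level computation needing no uniqueness argument. If you write your version up, make one point explicit: the axioms (\ref{A1})--(\ref{A3}) are stated for non-unital arguments, so in the places where you apply them to series with constant term $1$ (namely $\overline{X}$ and $\overline{X}*Y$) you should note that splitting off the unit and using $1\succ v=v$, $1\prec v=0$, $v\prec 1=v$, $v\succ 1=0$ reduces each application to the proper case; this works cleanly here precisely because $b$, $Z$, $Y-1$ and $\overline{X}-1$ have no constant term, but it is the one piece of bookkeeping your sketch waves at rather than performs.
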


\begin{proof}
For $n=1$ we have $D(w_{\succ}^{(1)}(a))=D(a)=a={\ell}^{(1)}(a)$.
We then proceed by induction on $n$ and compute:
 \allowdisplaybreaks{
\begin{eqnarray*}
    D(w_{\succ}^{(n)})&=&(S\star N)(w_{\succ}^{(n)})\\
    &=&\sum_{p=0}^{n-1}S(w_{\succ}^{(p)})*N(w_{\succ}^{(n-p)})\\
    &=&\sum_{p=0}^{n-1}S(w_{\succ}^{(p)})*\Big(N(w_{\succ}^{(n-p-1)}) \succ a\Big)
        +\sum_{p=0}^{n-1}S(w_{\succ}^{(p)})*\Big((w_{\succ}^{(n-p-1)})\succ a\Big)\\
    &=&\sum_{p=0}^{n-1}S(w_{\succ}^{(p)})*\Big(N(w_{\succ}^{(n-p-1)})\succ a\Big)
                       +(S\star \mop{Id})(w_{\succ}^{(n)})-S(w_{\succ}^{(n)})\\
    &=&\sum_{p=0}^{n-1}S(w_{\succ}^{(p)})*\Big(N(w_{\succ}^{(n-p-1)}) \succ a\Big)-S(w_{\succ}^{(n)}).
\end{eqnarray*}}
Applying the identities:
 \allowdisplaybreaks{
\begin{eqnarray}
    x*(y\succ z)&=&(x*y)\succ z+x\prec(y\succ z),\\
    S(w_{\succ}^{(n)})&=&-a\prec S(w_{\succ}^{(n-1)}),
\end{eqnarray}}
wet get then:
 \allowdisplaybreaks{
\begin{eqnarray*}
    D(w_{\succ}^{(n)})
        &=&\sum_{p=0}^{n-1}\Big(S(w_{\succ}^{(p)})*N(w_{\succ}^{(n-p-1)})\Big) \succ a
                         +\sum_{p=1}^{n-1}S(w_{\succ}^{(p)})\prec\Big(N(w_{\succ}^{(n-p-1)}) \succ a\Big)
                         -S(w_{\succ}^{(n)})\\
        &=&\Big((S\star N)(w_{\succ}^{(n-1)})\Big)\succ a+\sum_{p=1}^{n-1}
                            S(w_{\succ}^{(p)})\prec\Big(N(w_{\succ}^{(n-p-1)}) \succ a\Big)-S(w_{\succ}^{(n)})\\
        &=&D(w_{\succ}^{(n-1)})\succ a-\sum_{p=1}^{n-1}\Big(a\prec S(w_{\succ}^{(p-1)})\Big)
                            \prec\Big(N(w_{\succ}^{(n-p-1)}) \succ a\Big)+a\prec S(w_{\succ}^{(n-1)})\\
        &=&D(w_{\succ}^{(n-1)})\succ a-\sum_{p=1}^{n-1}a\prec \Big(S(w_{\succ}^{(p-1)})*\big(N(w_{\succ}^{(n-p-1)})\succ a\big)\Big)
            +a\prec S(w_{\succ}^{(n-1)})\\
        &=&D(w_{\succ}^{(n-1)})\succ a-\sum_{p=1}^{n-1}a\prec \Big(S(w_{\succ}^{(p-1)})*(N-\mop{Id})(w_{\succ}^{(n-p)})\Big)
            +a\prec S(w_{\succ}^{(n-1)})\\
        &=&D(w_{\succ}^{(n-1)})\succ a-\sum_{p=0}^{n-1}a\prec \Big(S(w_{\succ}^{(p)})*(N-\mop{Id})(w_{\succ}^{(n-1-p)})\Big)\\
        &=&D(w_{\succ}^{(n-1)})\succ a-a\prec\Big(\big(S\star(N-\mop{Id})\big)(w_{\succ}^{(n-1)})\Big)\\
        &=&D(w_{\succ}^{(n-1)})\succ a-a\prec D(w_{\succ}^{(n-1)})\\
        &=&D(w_{\succ}^{(n-1)})\rhd a={\ell}^{(n-1)}(a)\rhd a={\ell}^{(n)}(a).
\end{eqnarray*}}
\end{proof}
\goodbreak
\begin{thm}
\label{thm:invDynkinRB} \cite{EGP07}, \cite{EMP07} Let
$H=\bigoplus_{n\ge 0}H_n$ be an arbitrary graded connected
cocommutative Hopf algebra over a field of characteristic zero,
and let again $\hat H=\prod_{n\ge 0}H_n$ be its completion with respect
to the grading. The Dynkin operator $D \equiv S\star N$ induces a
bijection between the group~$G(H)$ of group-like elements of $\hat
H$ and the Lie algebra~${\rm{Prim}}(H)$ of primitive elements
in~$\hat H$. The inverse morphism from~${\rm{Prim}}(H)$ to~$G(H)$
is given by
\begin{equation}
\label{eq:invDynkin}
    h = \sum\limits_{n \geq 0} h_n \longmapsto
    \Gamma(h):=\sum\limits_{n \geq 0} \sum\limits_{i_1+\cdots+i_k
              = n \atop i_1,\ldots,i_k>0}\,\frac{h_{i_1} \cdots h_{i_k}}{i_1(i_1 + i_2) \cdots (i_1 + \cdots + i_k)}.
\end{equation}
\end{thm}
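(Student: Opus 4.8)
The plan is to reduce everything to a single first-order recursion for the grading operator $N$ evaluated on group-like elements. First I would record the fundamental computation: since the convolution $D=S\star N$ is evaluated through the coproduct, and a group-like element $g\in G(H)$ satisfies $\Delta(g)=g\otimes g$ and $S(g)=g^{-1}$ (the inverse taken for the associative product $*$ of $\hat H$), one gets at once
\[
 D(g)=m\circ(S\otimes N)\circ\Delta(g)=S(g)*N(g)=g^{-1}*N(g).
\]
Thus on group-like elements $D$ is nothing but the ``logarithmic derivative'' $g\mapsto g^{-1}*N(g)$ with respect to the grading, and the whole theorem amounts to solving the equation $N(g)=g*h$.

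Next I would check that $D(g)$ is primitive. Here I use that $N$ is simultaneously a derivation for $*$ and a coderivation, i.e. $\Delta\circ N=(N\otimes\mop{Id}+\mop{Id}\otimes N)\circ\Delta$, which holds because $\Delta$ preserves the grading. Applying $\Delta$ to $g^{-1}*N(g)$ and using $\Delta(g^{-1})=g^{-1}\otimes g^{-1}$ together with $\Delta(N(g))=N(g)\otimes g+g\otimes N(g)$, the mixed terms collapse through $g^{-1}*g=1$, leaving $\Delta(D(g))=D(g)\otimes 1+1\otimes D(g)$. Hence $D$ maps $G(H)$ into ${\rm{Prim}}(H)$.

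The core of the argument is then to identify the inverse with $\Gamma$. I would first establish the recursion $N(\Gamma(h))=\Gamma(h)*h$ for every primitive $h=\sum_{n\ge 1}h_n$. This is the combinatorial heart: writing $\Gamma(h)=\sum_n g_n$, with $g_n$ the sum over compositions $(i_1,\dots,i_k)$ of $n$ of $h_{i_1}\cdots h_{i_k}$ divided by $i_1(i_1+i_2)\cdots(i_1+\cdots+i_k)$, one peels off the last part $i_k=j$ of each composition; since the final denominator factor equals $n$, this exhibits $g_n=\frac1n\sum_{j=1}^{n}g_{n-j}*h_j$, that is $N(g_n)=ng_n=(\Gamma(h)*h)_n$. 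I would then show $\Gamma(h)$ is group-like by a uniqueness argument: introducing a parameter $t$ through $h_n\mapsto t^nh_n$, the relation $N(\cdot)=(\cdot)*h$ becomes a linear equation $t\frac{d}{dt}Y=Y*(\cdots)$; both $\Delta(\Gamma(h))$ and $\Gamma(h)\otimes\Gamma(h)$ satisfy the same equation (using primitivity of $h$ and that $N$ is a coderivation) with common constant term $1\otimes 1$, and the graded recursion $nY_n=\sum_j Y_{n-j}*A_j$ determines all coefficients uniquely, forcing $\Delta(\Gamma(h))=\Gamma(h)\otimes\Gamma(h)$.

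Finally I would assemble the bijection. From the recursion and $D(g)=g^{-1}*N(g)$ one gets $D(\Gamma(h))=\Gamma(h)^{-1}*N(\Gamma(h))=h$, so $D\circ\Gamma=\mop{Id}$ on ${\rm{Prim}}(H)$. Conversely, for $g\in G(H)$ set $h:=D(g)$; then $g$ solves $N(g)=g*h$ with constant term $1$, and the same uniqueness for the recursion $ng_n=\sum_j g_{n-j}*h_j$ (valid since $\mop{char}=0$ makes every denominator invertible) identifies $g$ with $\Gamma(h)=\Gamma(D(g))$, giving $\Gamma\circ D=\mop{Id}$ on $G(H)$. The main obstacle I anticipate is the bookkeeping in the composition-sum manipulation establishing $N(\Gamma(h))=\Gamma(h)*h$, together with the care needed to run the uniqueness/ODE argument in the completed, noncommutative setting $\hat H$, where $h$ and its homogeneous components need not commute; everything else is formal once $N$ is known to be both a derivation and a coderivation.
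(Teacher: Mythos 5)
Your proof is correct. Note that the paper under review does not actually prove Theorem \ref{thm:invDynkinRB} — it is imported from \cite{EGP07} and \cite{EMP07} — and your argument is essentially the one given in those references: characterize $D$ on group-like elements as the logarithmic derivative $g\mapsto g^{-1}*N(g)$, deduce primitivity of $D(g)$ from the coderivation property of $N$, observe that $\Gamma(h)$ solves $N(\Gamma(h))=\Gamma(h)*h$ by peeling off the last part of each composition, and then use the char-$0$ graded recursion $ng_n=\sum_{j\ge 1} g_{n-j}*h_j$ (both for the group-likeness of $\Gamma(h)$ via the tensor-square equation and for the identification $\Gamma(D(g))=g$) to conclude.
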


Since the element $X$ (resp. $Y$) above is a group-like element in the
Hopf algebra ${\hat H}[[t]]$, lemma
\ref{lem:dynkin-prelie} and theorem \ref{thm:invDynkinRB} imply
the following two identities:

\begin{thm}
\label{dynkin}
We have:
 \allowdisplaybreaks{
\begin{eqnarray}
    w^{(n)}_{\succ}(a) &=&
    \sum\limits_{i_1+ \cdots + i_k=n \atop  i_1,\ldots,i_k>0}
    \frac{\ell^{(i_1)}(a)
    * \cdots *
    \ell^{(i_k)}(a)}
         {i_1(i_1+i_2)\cdots(i_1+\cdots+i_k)},
         \label{solY1}\\
    w^{(n)}_{\prec}(a) &=&
    \sum\limits_{i_1+ \cdots +i_k=n \atop  i_1,\ldots,i_k>0}
    \frac{r^{(i_k)}(a)
    * \cdots *
    r^{(i_1)}(a)}
         {i_1(i_1+i_2)\cdots(i_1+\cdots+i_k)}.
         \label{solX1}
\end{eqnarray}}
\end{thm}

\begin{proof}
Identity (\ref{solY1}) is indeed obtained immediately. Identity
(\ref{solX1}) can be derived easily as follows: consider the
alternative dendriform structure on $A$ defined by:
\begin{equation}
    a\preceq b:=-b\succ a,\hskip 20mm a\succeq b:=-b\prec a.
\end{equation}
The associated associative algebra structure is then defined by:
\begin{equation}
\label{twostars}
    a \ostar b:=-b*a.
\end{equation}
The two pre-Lie operations $\rhd$, $\lhd$ are the same for both
dendriform structures, and are related one to each other by:
\begin{equation}
\label{twoprelie}
    a\lhd b=-b\rhd a.
\end{equation}
We can then obtain (\ref{solX1}) from (\ref{solY1}) and the
identity:
$$
    w_\prec^{(n)}(a)=-w_\succeq^{(n)}(-a).
$$
\end{proof}

An alternative way to deduce (\ref{solX1}) from (\ref{solY1})
consists in applying the antipode $S$ to both sides of
(\ref{solY1}): all $\ell^{(n)}(a)$'s are primitive, as we can see
>from the fact that $D(Y)$ is primitive and from applying lemma
\ref{lem:dynkin-prelie}. The computation follows then easily by
$S\big(\ell^{(n)}(a)\big)=-\ell^{(n)}(a)=(-1)^n r^{(n)}(a)$.

\begin{exam} {\rm{ Let us consider the MAX dendriform algebra $MAX(X
)$ over a countable ordered alphabet
$X=\{x_1,\ldots,x_n,\ldots\}$, (see example~\ref{ex:MAX}), and let us
set $a:=x_1+\cdots+x_n$. Then, we get immediately:
$$
    w_\succ^{(n)}(a)=x_1 \cdots x_n
$$
whereas the multilinear part $m \ell^{(i)}(a)$ of $\ell^{(i)}(a)$
for $i \leq n$ (the component of $\ell^{(i)}(a)$ obtained by
subtracting from $\ell^{(i)}(a)$ the monomials involving non
trivial powers of the letters in $X$, so that e.g. $\ell^{(2)}(x_1
+ x_2) = x_1x_2 - x_2x_1 - x_1^2-x_2^2$ and $m\ell^{(2)}(x_1 +
x_2) = x_1x_2 - x_2x_1$) is given by
$$
    ml^{(i)}(a)=\sum\limits_{1\leq j_1< \cdots <j_i\leq n} D(x_{j_1}\cdots x_{j_i}).
$$
We will abbreviate $D(x_{j_1}\cdots x_{j_i})$ to $D(J)$, where
$J=\{j_1, \ldots ,j_i\}$, so that:
$$
    ml^{(i)}(a)=\sum\limits_{J\subset [n] \atop |J|=i} D(J).
$$
By theorem~\ref{dynkin}, we obtain (keeping only the multilinear
part of the expansion on the right hand side):
$$
    x_1 \cdots x_n = \sum\limits_{i_1+ \cdots + i_k =n
                                  \atop  i_1,\ldots,i_k>0}
                                  \sum\limits_{J_1\coprod  \cdots \coprod J_k=[n] \atop |J_l|=i_l}
    \frac{D(J_1) \cdot \cdots \cdot D(J_k)} {i_1(i_1+i_2) \cdots (i_1+\cdots+i_k)}.
$$
Readers familiar with the Hopf algebraic approach to free Lie
algebras advocated in \cite{reutenauer93} will recognize that this
identity may be rewritten as an expansion of the identity of
$T(X)$ in terms of the Dynkin operator:
$$
    x_1 \cdots x_n = \sum\limits_{i_1+ \cdots + i_k=n \atop  i_1,\ldots,i_k>0}
    \frac{D_{i_1} \star \cdots \star D_{i_k}}
         {i_1(i_1+i_2)\cdots(i_1+\cdots+i_k)}(x_1\cdots x_n),
$$
where $\star$ stands for the convolution product in the set of
linear endomorphisms of $T(X)$, $End(T(X))$
\cite[p.28]{reutenauer93}.}}
\end{exam}

\begin{exam}{\rm{Let us turn to the Malvenuto--Reutenauer
dendriform algebra. Here, we have: $w_\prec^{(n)}(1)=1\cdots n$,
the identity in the symmetric group $S_n$. One can check that
$r^{(n)}(1)$ is the image under the inversion in the symmetric
group $\sigma \rightarrow I(\sigma):=\sigma^{-1}$ (extended
linearly to the group algebra) of the iterated bracket:
$[1,[2,\ldots[n-1,n]\cdots]]$, with the usual convention:
$[i,j]=ij-ji$. We get:
$$
    1 \ldots n = \sum\limits_{i_1+ \cdots + i_k=n
                              \atop  i_1,\ldots,i_k>0}
                              \frac{I([1,\ldots[i_k-1,i_k]\cdots]) \ast \cdots \ast I([1,\ldots[i_1-1,i_1]\cdots])}
                                   {i_1(i_1+i_2)\cdots(i_1+ \cdots + i_k)}.
$$
}}
\end{exam}


\section{Exponential expansions of dendriform power sums}

The following describes an exponential expression of
$Y=Y(t)=\sum_{n \geq 0} t^nw^{(n)}_{\succ}(a)$. An analogous
result is readily derived for $X=X(t)$. Let us define the
exponential map in terms of the associative product,
$\exp^*(x):=\sum_{n \geq 0} x^{*n}/n!$. In $A[[t]]$ we may write
the grading operator $N$ naturally as $t
\partial_t$.

Starting with the fact that $Y(t)$ is group-like in $H$ we easily
find in $A[[t]]$:
\begin{equation}
    D(Y)=Y^{-1} * (t \dot Y),
\end{equation}
hence $\dot Y=Y*\hat{\Cal L}$, with $\hat{\Cal L}:=\hat{\Cal
L}(t)=\frac{D(Y)}{t}=\sum_{n>0}\ell^{(n)}(a)t^{n-1}$. Using
Magnus' expansion~\cite{Magnus} for the solution of first order
linear differential equations, we immediately have
$Y(t)=\exp^*{\Omega(t)}$, $\Omega(t):=\sum_{n>0} \Omega^{(n)}t^n$,
with:
\begin{equation}
    \dot\Omega(t)=\frac{\mop{ad}\Omega(t)}{1-e^{-\smop{ad}\Omega(t)}}\hat{\Cal L}(t).
\end{equation}
This leads to the following well known recursion for $\Omega$:
\begin{equation}
    \Omega(t) = \int_0^t \biggl(\hat{\mathcal{L}}(s) +
    \sum_{n>0} (-1)^n\frac{B_n}{n!} \Big[\hbox{\rm ad} \bigl(\Omega(s)\bigr)\Big]^{n}
    (\hat{\mathcal{L}}(s))\biggr)ds,
\label{Magnus}
\end{equation}
with~$B_n$ the Bernoulli numbers. For $n=1,2,4$ we find
$B_1=-1/2$, $B_2=1/6$ and $B_4=-1/30$, and $b_3=b_5=\cdots=0$. For
the first three terms in the Magnus expansion we find:
\begin{equation}
    \Omega^{(1)}=\ell^{(1)}(a), \;\
    \Omega^{(2)}=\frac{1}{2}\ell^{(2)}(a), \;\
    \Omega^{(3)}=\frac{1}{3}\ell^{(3)}(a) + \frac{1}{12}[\ell^{(1)}(a),\ell^{(2)}(a)],
    \ldots
\label{fullhouse}
\end{equation}
We remark here that the Lie brackets can be written
either in terms of the associative product $*$ or the left
respectively right pre-Lie product.


\section{Lyndon words and dendriform power sums}

Now let $a_1,\ldots ,a_n$ be a collection of elements in $A$. For
any permutation $\sigma\in S_n$ we define the element
$T_\sigma(a_1,\ldots ,a_n)$ as follows: define first the subset
$E_\sigma\subset\{1,\ldots ,n\}$ by $k\in E_\sigma$ if and only if
$\sigma_{k+1}>\sigma_j$ for any $j\le k$, where we abbreviate $\sigma (i)$ to $\sigma_i$. We write $E_\sigma$ in
the increasing order:
\begin{equation*}
    1\le k_1<\cdots < k_p\le n-1.
\end{equation*}
Then we set:
\begin{equation}\label{tsigma}
 T_\sigma(a_1,\ldots ,a_n):=
   \ell^{(k_1)}(a_{\sigma_1},\dots,a_{\sigma_{k_1}}) *\cdots *
   \ell^{(n-k_{p})}(a_{\sigma_{k_p+1}},\dots,a_{\sigma_{n}})
\end{equation}
Quite symmetrically we
define the element $U_\sigma(a_1,\ldots ,a_n)$ by considering
first the subset $F_\sigma\subset\{1,\ldots ,n\}$ defined by $l\in
F_\sigma$ if and only if $\sigma_{l}<\sigma_j$ for any $j\ge l+1$.
We write $F_\sigma$ in the increasing order:
\begin{equation*}
    1\le l_1<\cdots < l_q \le n-1.
\end{equation*}
Then we set:
\begin{equation}
\label{usigma}
 U_\sigma(a_1,\ldots ,a_n):=
   r^{(l_1)}(a_{\sigma_1},\dots,a_{\sigma_{l_1}}) *\cdots *
   r^{(n-l_{q})}(a_{\sigma_{l_q+1}},\dots,a_{\sigma_{n}})
\end{equation}

Following \cite{Lam} it is convenient to encode graphically the
previous statistics on permutations. We write a permutation by
putting a vertical bar (respectively a double bar) after each
element of $E_\sigma$ or $F_\sigma$ according to the case. For
example for the permutation $\sigma=(3261457)$ inside $S_7$ we
have $E_\sigma=\{2,6\}$ and $F_\sigma=\{4,5,6\}$. Putting the
vertical bars:
\begin{equation}
    \sigma=(32|6145|7),\hskip 20mm \sigma=(3261||4||5||7)
\end{equation}
we see that the corresponding elements in $A$ will then be:
 \allowdisplaybreaks{
\begin{eqnarray}
 T_\sigma(a_1,\ldots , a_7)&=&\ell^{(2)}(a_3,a_2)
                              * \ell^{(4)}(a_6,a_1,a_4,a_5)* \ell^{(1)}(a_7)\\
                           &=& (a_3\rhd a_2)*\Big(\big((a_6\rhd a_1)\rhd a_4\big)\rhd a_5\Big)*a_7,\\
 U_\sigma(a_1,\ldots a_7)&=& r^{(4)}(a_3,a_2,a_6,a_1)* r^{(1)}(a_4)
                                * r^{(1)}(a_5)* r^{(1)}(a_7)\\
                         &=&\Big(a_3\lhd\big(a_2\lhd(a_6\lhd a_1)\big)\Big)*a_4*a_5*a_7.
\end{eqnarray}}

\begin{thm}\label{main}
For any $a_1,\ldots ,a_n$ in the dendriform dialgebra $A$ the
following identities hold:
 \allowdisplaybreaks{
\begin{eqnarray}
 \sum_{\sigma\in S_n}\big(\cdots(a_{\sigma_1}\succ a_{\sigma_2})\succ\cdots\big)\succ a_{\sigma_n}
    &=&\sum_{\sigma\in S_n}T_\sigma(a_1,\ldots ,a_n),\label{eq:main}\\
       \sum_{\sigma\in S_n}a_{\sigma_1}\prec\big(\cdots (a_{\sigma_{n-1}}\prec a_{\sigma_n})\cdots\big)
    &=&\sum_{\sigma\in S_n}U_\sigma(a_1,\ldots ,a_n).\label{eq:bismain}
\end{eqnarray}}
\end{thm}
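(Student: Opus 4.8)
The plan is to derive both identities from the single-generator expansion (\ref{solY1}) of Theorem \ref{dynkin} by polarization. One would specialize (\ref{solY1}) at $a=a_1+\cdots+a_n$ and extract the \emph{multilinear component}, the part in which each $a_i$ occurs exactly once. Since $w^{(n)}_\succ$ is the $n$-fold left-nested $\succ$-product and is $n$-linear in its entries, the multilinear component of its left-hand side is exactly $\sum_{\sigma\in S_n}\big(\cdots(a_{\sigma_1}\succ a_{\sigma_2})\succ\cdots\big)\succ a_{\sigma_n}$, the left-hand side of (\ref{eq:main}).

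Next I would expand the right-hand side of (\ref{solY1}) under the same specialization. Because each $\ell^{(i)}$ is $i$-linear and $*$ is bilinear, the multilinear component of $\ell^{(i_1)}(a)*\cdots*\ell^{(i_k)}(a)$ is the sum, over all ways of partitioning $\{1,\dots,n\}$ into an ordered sequence of blocks $(S_1,\dots,S_k)$ with $|S_l|=i_l$ and over all internal orderings of each block, of the corresponding products of iterated pre-Lie brackets. Thus polarization of (\ref{solY1}) yields
\begin{equation*}
 \sum_{\sigma\in S_n}\big(\cdots(a_{\sigma_1}\succ a_{\sigma_2})\succ\cdots\big)\succ a_{\sigma_n}
 = \sum_{i_1+\cdots+i_k=n}\frac{1}{i_1(i_1+i_2)\cdots(i_1+\cdots+i_k)}
   \sum_{(S_1,\dots,S_k)}\ \sum_{\text{orderings}}\ \ell^{(i_1)}(\cdots)*\cdots*\ell^{(i_k)}(\cdots),
\end{equation*}
so that (\ref{eq:main}) becomes equivalent to the statement that this weighted sum over \emph{all} ordered set partitions, with all internal orderings, collapses to the \emph{unweighted} sum $\sum_{\sigma\in S_n}T_\sigma(a_1,\dots,a_n)$.

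The heart of the argument, and the step I expect to be the main obstacle, is this collapse. The point is to read $\sum_\sigma T_\sigma$ through the canonical left-to-right maxima decomposition of a permutation: the cut set $E_\sigma$ breaks $\sigma$ into blocks each of which begins with a left-to-right maximum, so that the block leaders strictly increase and the last leader is $n$; conversely such ``max-first, increasing-maxima'' ordered set partitions are in bijection with $S_n$. Hence $\sum_\sigma T_\sigma$ is exactly the sum over those \emph{special} ordered partitions, each with coefficient $1$. I would prove the collapse by induction on $n$, peeling off the block that contains the global maximum $n$ (necessarily the last block, with leader $n$): the inner sum over orderings of that block, combined with the partial-sum weights $1/\big(i_1(i_1+i_2)\cdots n\big)$, must be shown to reproduce exactly the max-first normalization. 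The delicate point is that term-by-term matching is \emph{not} available, since the $\ell^{(i)}(c_1,\dots,c_i)$ for different orderings are not linearly independent; the identity holds only after using the left pre-Lie relation (\ref{prelie}) among these left-combs. Reconciling the fractional weights of the ``all orderings'' expansion with the unit weights of the ``max-first'' expansion, so that the pre-Lie relations conspire with the partial-sum denominators, is where the combinatorics of left-to-right maxima does the real work, and is the genuine obstacle.

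Finally, (\ref{eq:bismain}) follows by the same scheme applied to (\ref{solX1}) in place of (\ref{solY1}); equivalently, one passes to the opposite dendriform structure $a\succeq b:=-b\prec a$, $a\preceq b:=-b\succ a$ already used in the proof of Theorem \ref{dynkin}. Under this passage the associative product becomes $a\ostar b=-b*a$, the two pre-Lie products are exchanged up to sign via $a\lhd b=-b\rhd a$, so that $\ell^{(i)}$ turns into $\pm\,r^{(i)}$ and the left-nested $\succ$-products turn into right-nested $\prec$-products; correspondingly the leader statistic $E_\sigma$ is exchanged with $F_\sigma$ by reversing and complementing the permutation. Tracking the signs exactly as in Theorem \ref{dynkin} converts (\ref{eq:main}) into (\ref{eq:bismain}), so that the second identity requires no new combinatorial input beyond the first.
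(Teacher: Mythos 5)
Your reduction via polarization is valid as far as it goes: Theorem \ref{dynkin} holds in the free dendriform algebra on $n$ generators, so specializing (\ref{solY1}) at $a=a_1+\cdots+a_n$ and extracting the multilinear component is legitimate, and it yields exactly the weighted identity you display, whose left-hand side is that of (\ref{eq:main}). But the proof stops precisely where the theorem begins. Once (\ref{solY1}) is granted, the entire content of (\ref{eq:main}) is the ``collapse'': that the sum over \emph{all} ordered set partitions with \emph{all} internal orderings, weighted by $1/\bigl(i_1(i_1+i_2)\cdots(i_1+\cdots+i_k)\bigr)$, equals the unit-weight sum over the special partitions encoded by $E_\sigma$ (max-led blocks with increasing maxima). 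You state that this ``must be shown'', correctly observe that it cannot be checked term by term because the left combs $\ell^{(i)}$ satisfy relations, and then name it ``the genuine obstacle'' --- without resolving it. Note that the collapse is not formal even for $n=2$: there it requires $\ell^{(2)}(a_1,a_2)-\ell^{(2)}(a_2,a_1)=a_1*a_2-a_2*a_1$, i.e.\ that $\rhd$ and $*$ induce the same Lie bracket; for general $n$ it is a Bohnenblust--Spitzer-type rearrangement essentially as hard as Theorem \ref{main} itself. Moreover, the induction you sketch (``peel off the block containing $n$, necessarily the last block, with leader $n$'') describes only the unit-weight side: on the weighted side $n$ may sit anywhere inside any block, that block need not be last, and $n$ need not lead it, so the peeling does not match the structure of the sum you must transform, and you offer no mechanism (no explicit use of the pre-Lie identity (\ref{prelie})) for normalizing those terms. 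As it stands, this is a reduction to an unproved key lemma, not a proof.

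For comparison, the paper does not route through Theorem \ref{dynkin} at all: it proves (\ref{eq:main}) by induction on $n$ using the partition (\ref{partition}) of $S_n$ according to the position $j$ of the letter $n$ and the value $k$ immediately following it, together with Lemma \ref{bijections}, which identifies $T_\sigma$ for $\sigma\in S_n^k$, $k<n$, with $T_{\widetilde\sigma}$ on $n-1$ arguments where the pair $(n,k)$ is replaced by the single entry $a_n\rhd a_k$; expanding $\rhd$ and applying axioms (\ref{A2}) and (\ref{A3}) then makes all unwanted terms cancel telescopically. Your treatment of (\ref{eq:bismain}) --- passing to the opposite structure $a\succeq b:=-b\prec a$, $a\preceq b:=-b\succ a$ --- is sound and is in fact exactly the paper's Lemma \ref{lem:TU}, but it is conditional on (\ref{eq:main}). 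If you wish to salvage your route, the collapse identity, which lives purely in the algebra generated by the $\ell^{(i)}$'s under $*$, is where all the work lies, and proving it would require an induction of essentially the same nature as the one the paper carries out directly.
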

We postpone the proof to the next section, and first give some applications of the identities.

\begin{exam} {\rm{Let us recall first the notion of Lyndon words.
For a given ordered alphabet $X=\{x_1,\ldots,x_n,\ldots\}$, a
Lyndon word is a word (an element $y_1\cdots y_n$, $y_i \in X$, of
the free monoid $X^*$ over $X$) that is strictly less in the
lexicographical ordering than any of its proper right factors
(i.e. strictly less than the $y_i\cdots y_n$, $i>1$). The length
$lgt(w)$ of a word $w$ is the number of letters (with repetitions)
in $w$, so that e.g. $lgt(x_2x_1x_2x_6)=4$.

A fundamental theorem \cite{lot83} asserts that each word $w$ in
$X^*$ has a unique Lyndon factorisation, i.e. can be written
uniquely as
$$
    w=l_1 \cdots l_k
$$
where each $l_i$ is a Lyndon word with $l_1 \geq \cdots \geq l_k$.
The sequence $(lgt(l_1),\ldots,lgt(l_k))$ will be called the
Lyndon sequence of $w$, and written $L(w)$. In the particular case
where $X=[n]^{op}:=\{n,n-1,\ldots,1\}$, the set of positive
integers with the decreasing ordering, it is easily checked that
the Lyndon factorization of a permutation $\sigma \in S_n$, viewed
as the word $\sigma(1)\cdots \sigma(n)$ over $X$ is nothing but
the decomposition introduced above in the definition of $T_\sigma$
so that, for example, the Lyndon factorization of $(3261457)$ is
$32 \cdot 6145 \cdot 7$.

It is also well known that Lyndon words were first introduced to
parameterize bases of the free Lie algebra \cite{mr89}. This
suggests that theorem~\ref{main} might be connected to properties
of bases of free Lie algebras. This is indeed the case, and
dendriform identities provide still another approach and
contribution to their theory and the one of Lyndon words. This
might seem not so surprising after all, since Sch\"utzenberger's
discovery of the dendriform identities has been motivated by the
construction of such bases. Notice however that the dendriform
structure we use below is the MAX dendriform structure of
example~\ref{ex:MAX} and not the one classically used in the
combinatorics of words --that is, the shuffle one. Notice also,
that using different MAX dendriform structures on $T(X)$ (e.g. by
reversing the order on $X$, and so on), would give rise to other
combinatorial formulas than the ones obtained below.

So, let us consider $T(X)$, $X=\{x_1,\ldots,x_n\}$, $x_i <
x_{i+1}$ as a MAX dendriform algebra. Let $\beta$ be an arbitrary
permutation, and set: $a_1:=x_{\beta(1)},\ldots
,a_n:=x_{\beta(n)}$. We have:
$$
    \sum_{\sigma\in S_n}\big(\cdots(a_{\sigma(1)}\succ a_{\sigma(2)})\succ\cdots\big)\succ a_{\sigma(n)}
    =(\cdots((x_1\succ x_2)\succ x_3)\cdots \succ  x_n) = x_1x_2\cdots x_n,
$$
since all the terms in the sum vanish, but one. On the other hand,
for any words $a,b$ in $X^\ast$ without common letters, $a
\triangleright b = [a,b]$ if $max(a) < max(b)$, and zero else. We
get, for any sequence $(a_{i_1},\ldots ,a_{i_k})$, with the $i_j$
distinct:
$$
    (\cdots(a_{i_1}\triangleright a_{i_2})\cdots \triangleright a_{i_k})=0
$$
excepted if the sequence $\beta(i_1),\ldots ,\beta(i_k)$ is
increasing, and then:
$$
    (\cdots(a_{i_1}\triangleright a_{i_2}) \cdots \triangleright a_{i_k})
    =D(a_{i_1}\cdots a_{i_k})=D(x_{\beta (i_1)}\cdots x_{\beta(i_k)}).
$$

Let us write $Lyn(\beta)$ for the set of permutations $\sigma \in
S_n$ such that, if $l_1(\sigma)\cdots l_{k(\sigma)}(\sigma)$,
$l_i(\sigma)=\sigma(n_i)\cdots \sigma(n_{i+1}-1)$ is the Lyndon
factorization of $\sigma$ (as defined above, that is, with respect
to the {\textsl{decreasing order on}} $[n]$), then:
$$
    \forall i \leq k,\ {\beta\circ\sigma (n_i)}<\cdots<{\beta\circ\sigma (n_{i+1}-1)}.
$$

Notice that, for $\sigma =1\ldots n$, with Lyndon factorization of
maximal length $1\cdot 2 \cdot \ \cdots \ \cdot n$, we get $\sigma \in
Lyn(\beta)$ for any $\beta\in S_n$.

For any sequence $S=(i_1,\ldots,i_p)$ of elements of $[n]$, we write
$D(S):=D(x_{i_1}\cdots x_{i_k})$. We also write $\beta(S)$ for
$(\beta(i_1),\ldots,\beta(i_p))$. We get, for any $\beta\in S_n$:
$$
    x_1x_2\cdots x_n=\sum\limits_{\sigma\in Lyn(\beta )}D\big(\beta(l_1(\sigma))\big)\cdots D\big(\beta(l_{k(\sigma)}(\sigma))\big).
$$

A point that should be noticed immediately is that this
decomposition is {\sl not} the classical decomposition of a word
of $X^\ast$ in the Lyndon basis, as described in~\cite{mr89} (and
neither a direct variant thereof). The reason for this is that, by definition
of the Dynkin operator, the opening brackets
inside the blocks $D\big(\beta(l_j(\sigma))\big)$ are all set to the left,
contrarily to what happens in the standard Lyndon factorizations defined in \cite{mr89}.
Since there is a unique
permutation with Lyndon factorization of maximal length, $1\ldots
n$, we also notice that, for any $\beta\in S_n$, this identity is
a rewriting rule expanding $x_1x_2\ldots x_n$ as $x_{\beta
(1)}\cdots x_{\beta (n)}$ plus a sum with integer coefficients of
products of Lie brackets.

Let us consider a few examples. If $\sigma=1\ldots n$, we have
$Lyn(1\ldots n)=\{1\ldots n\}$, and the identity is trivial:
$x_1x_2\cdots x_n=x_1x_2 \ldots x_n$. If $\sigma =\omega= n\ldots
1$, we get:
\begin{equation}
\label{dynkid}
    x_1\cdots x_n = x_n\cdots x_1
                   +\sum\limits_{S_1\coprod \cdots \coprod S_k=[n] \atop
                                 S_i=\{n_1^i<\cdots <n_{|S_i|}^i\}}
                                \prod\limits_{i=1\ldots k}D(n_1^i \cdots n_{|S_i|}^i)
\end{equation}
where the sum runs over all the set partitions of $[n]$, ordered
so that $max(S_1)>\cdots>max (S_k)$, and where the last product is
naturally ordered (the $i-th$ term of the product is
written to the left of the $(i+1)$-th).

This decomposition has a striking property. For brevity sake, we
refer the reader to \cite[Sect.5.6.2]{reutenauer93} for further
details on the notions and results mentioned below. Recall that,
for a given $n$, the Lie brackets $D(S)$, where $S$ runs over all
the words $S=1S'$, $S'$ a permutation of $\{2,\ldots,n\}$ (e.g.
$n=5$, $S=15234$, $S'=5234$) form a basis (over any field of
characteristic zero) of the multilinear part of the free Lie
algebra on $[n]$ --let us call this basis the Dynkin basis. From
this point of view, equation~(\ref{dynkid}) gives nothing but the
expansion of $x_1\cdots x_n$ in the Poincar\'e--Birkhoff--Witt
(PBW) basis (see \cite[Th.0.2]{reutenauer93}) of the multilinear
part of the free Lie algebra over $X$ associated with the Dynkin
basis.

For example, we get:
$$
    x_1x_2x_3=x_3x_2x_1+[x_2,x_3]x_1+x_2[x_1,x_3]+x_3[x_1,x_2]+[[x_1,x_2],x_3],
$$
 \allowdisplaybreaks{
\begin{eqnarray*}
  x_1x_2x_3x_4 &=&   x_4x_3x_2x_1 + x_4x_3[x_1,x_2] + x_4[x_2,x_3]x_1 +x_4x_2[x_1,x_3] + x_4[[x_1,x_2],x_3] + [x_3,x_4]x_2x_1\\
               & & + [x_3,x_4][x_1,x-2] + x_3[x_2,x_4]x_1 + x_3x_2[x_1,x_4] + x_3[[x_1,x_2],x_4] + [[x_2,x_3],x_4]x_1\\
               & & + [x_2,x_4][x_1,x_3] + x_2[[x_1,x_3],x_4] + [[[x_1,x_2],x_3],x_4].
\end{eqnarray*}}

For a general $\sigma$, the expansion allows to rewrite $\sigma$
as a sum of monomials of elements in the Dynkin basis. These
results seem to be new, and connect the fine structure of free Lie
algebras with our structural results on dendriform objects.}}
\end{exam}

\begin{exam}{\rm{ Let us consider now the Malvenuto--Reutenauer
dendriform algebra ${\bf S}_*$. Setting $a_1= \cdots =a_n:=1$ in
theorem~\ref{main}, we get:
$$
    \sum_{\sigma\in S_n} a_{\sigma_1}\prec\big(\cdots (a_{\sigma_{n-1}}\prec a_{\sigma_n})\cdots\big)
                                   =n!\ 1 \ldots n
$$
On the other hand, we know that
$r^{(n)}(1)=I([1,[\cdots[n-1,n]\cdots ]])$ and get (using the symmetry between the definitions of $T_\sigma$ and $U_\sigma$):
$$
    n!\ 1 \ldots n=\sum\limits_{i_1+\cdots +i_k=n, \atop i_j>0}
             |\{\sigma \in S_n,\ L(\sigma )=(i_1, \ldots ,i_k)\}|\
             I([1,[ \cdots [i_k-1,i_k] \cdots ]])\ast  \cdots \ast I([1,[ \cdots [i_1-1,i_1]\cdots]])
$$
Since the Dynkin-type elements $I([1,[ \cdots [i_k-1,i_k] \cdots
]])$ are algebraically independent in the Malvenuto--Reutenauer
algebra (this follows e.g. from \cite[Sect.5]{gelfand1995} and
>from the existence of an embedding of Solomon's descent algebra in
the Malvenuto--Reutenauer algebra \cite{mr95}), one can identify
the coefficients of the last sum with the corresponding
coefficients of the expansion of $1\ldots n$ in example 2 in
section~\ref{dpse}. We get as a corollary an indirect (but
conceptually interesting) computation of the number of
permutations with a given Lyndon sequence:
$$
    |\{\sigma\in S_n,\ L(\sigma )  = (i_1, \ldots ,i_k)\}| = \frac{n!}{i_1(i_1+i_2) \cdots (i_1+ \cdots +i_k)}.
$$
}}
\end{exam}


\section{Proof of the Identity in Theorem~\ref{main}}

Notice that if the left-hand sides of (\ref{eq:main}) and
(\ref{eq:bismain}) are by definition invariant under the
permutation group $S_n$, it is not obvious at all that the
right-hand sides share the same property. The proof of
(\ref{eq:main}) proceeds by induction on the number $n$ of
arguments, and (\ref{eq:bismain}) will be easily deduced from
(\ref{eq:main}). The case $n=2$ reduces to the identity:
\begin{equation}
    a_1\succ a_2+a_2\succ a_1=a_1*a_2+a_2\rhd a_1,
\end{equation}
which immediately follows from the definitions. It is instructive
to detail the case $n=3$: considering the six permutations in
$S_3$:
\begin{equation*}
    (1|2|3),\hskip 8mm (21|3),\hskip 8mm (1|32),\hskip 8mm (321),\hskip 8mm (2|31),\hskip 8mm (312),
\end{equation*}
we then compute, using axioms (\ref{A2}) and (\ref{A3}):
 \allowdisplaybreaks{
\begin{eqnarray*}
    \lefteqn{a_1*a_2*a_3+(a_2\rhd a_1)*a_3+a_1*(a_3\rhd a_2)+(a_3\rhd a_2)\rhd a_1
    +a_2*(a_3\rhd a_1)+(a_3\rhd a_1)\rhd a_2}\\
    &=&(a_1\succ a_2+a_2\succ a_1)*a_3+a_1\succ(a_3\rhd a_2)+(a_3\rhd a_2)\succ a_1
       +a_2\succ(a_3\rhd a_1)+(a_3\rhd a_1)\succ a_2\\
    &=&(a_1\succ a_2)\succ a_3+(a_2\succ a_1)\succ a_3+(a_1\succ a_2)\prec a_3+(a_2\succ a_1)\prec a_3\\
    &\hskip 8mm +&a_1\succ(a_3\succ a_2)-a_1\succ(a_2\prec a_3)
       +(a_3\succ a_2)\succ a_1-(a_2\prec a_3)\succ a_1\\
    &\hskip 8mm +&a_2\succ(a_3\succ a_1)-a_2\succ(a_1\prec a_3)+(a_3\succ a_1)\succ a_2
       -(a_1\prec a_3)\succ a_2\\
    &=&(a_1\succ a_2)\succ a_3+(a_2\succ a_1)\succ a_3+(a_1\succ a_2)\prec a_3
       +(a_2\succ a_1)\prec a_3\\
    &\hskip 8mm +&(a_1\succ a_3)\succ a_2+(a_1\prec a_3)\succ a_2-a_1\succ(a_2\prec a_3)
       +(a_3\succ a_2)\succ a_1-(a_2\prec a_3)\succ a_1\\
    &\hskip 8mm +&(a_2\succ a_3)\succ a_1+(a_2\prec a_3)\succ a_1-a_2\succ(a_1\prec a_3)
       +(a_3\succ a_1)\succ a_2-(a_1\prec a_3)\succ a_2\\
    &=& (a_1\succ a_2)\succ a_3+(a_2\succ a_1)\succ a_3+(a_1\succ a_3)\succ a_2\\
    &\hskip 8mm +&(a_3\succ a_1)\succ a_2+(a_2\succ a_3)\succ a_1+(a_3\succ a_2)\succ a_1.
\end{eqnarray*}}
To start with the proof of the general case, we consider the following
partition of the group $S_n$:
\begin{equation}
\label{partition}
    S_n = S_n^n\amalg\coprod_{j,k=1}^{n-1}S_n^{j,k},
\end{equation}
where $S_n^n$ is the stabilizer of $n$ in $S_n$ , and where
$S_n^{j,k}$ is the subset of those $\sigma\in S_n$ such that
$\sigma_j=n$ and $\sigma_{j+1}=k$. We will set for
$k\in\{1,\ldots,n-1\}$:
\begin{equation}
    S_n^k:=\coprod_{j=1}^{n-1}S_n^{j,k}.
\end{equation}
This is the subset of permutations in $S_n$ in which the two-terms
subsequence $(n,k)$ appears in some place. We have:
\begin{equation}
    S_n=\coprod_{j=1}^{n}S_n^{k}.
\end{equation}
Each $S_n^k$ is in bijective correspondence with $S_{n-1}$, in an
obvious way for $k=n$, by considering the two-term subsequence
$(n,k)$ as a single letter for $k\not =n$. Precisely, in that
case, in the expansion of $\sigma\in S_n$ as a sequence
$\big(\sigma(1),\ldots ,\sigma(n)\big)$, we replace the pair
$(n,k)$ by $n-1$ and any $j\in\{k+1,\ldots,n-1\}$ by $j-1$, so
that, for example, $(2,1,5,3,4)\in S_5^{3,3}$ is sent to
$(2,1,4,3)$. For each $\sigma\in S_n^{k}$ we denote by
$\widetilde\sigma$ its counterpart in $S_{n-1}$. Notice that for
any $k\not =n$ and for any $j\in\{1,\ldots ,n-1\}$, the
correspondence $\sigma\mapsto \sigmat$ sends $S_n^{j,k}$ onto the
subset of $S_{n-1}$ formed by the permutations $\tau$ such that
$\tau_j=n-1$. The following lemma is almost immediate:

\begin{lem}\label{bijections}
For $\sigma\in S_n^n$ we have:
\begin{equation}
    T_\sigma(a_1,\ldots ,a_n)=T_{\widetilde\sigma}(a_1,\ldots ,a_{n-1})*a_n,
\end{equation}
and for $\sigma\in S_n^k, k<n$ we have:
\begin{equation}
    T_\sigma(a_1,\ldots ,a_n)=T_{\widetilde\sigma}(a_1,\ldots ,\widehat{a_k},\ldots, a_{n-1},a_n\rhd a_k),
\end{equation}
where $a_k$ under the hat has been omitted.
\end{lem}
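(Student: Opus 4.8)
The plan is to read $T_\sigma$ block by block, using that the blocks of the word $(\sigma_1,\ldots,\sigma_n)$ are precisely the maximal stretches between consecutive strict left-to-right maxima: by the definition of $E_\sigma=\{k_1<\cdots<k_p\}$, a new block opens exactly at each position carrying a strict record, so the block-starts are $1,k_1+1,\ldots,k_p+1$. Both assertions then reduce to comparing these records, and the corresponding $\ell$-factors, before and after the collapse $\sigma\mapsto\widetilde\sigma$.

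For $\sigma\in S_n^n$, i.e. $\sigma_n=n$: since $n$ is the global maximum, position $n$ is a record, so $n-1\in E_\sigma$ and the terminal block is the singleton $\{n\}$, contributing $\ell^{(1)}(a_{\sigma_n})=a_n$. Deleting the last letter changes none of the earlier records, because the inequalities defining $k\in E_\sigma$ for $k\le n-2$ involve only positions $\le n-1$; hence $E_{\widetilde\sigma}=E_\sigma\setminus\{n-1\}$ and the remaining factors are literally those of $T_{\widetilde\sigma}(a_1,\ldots,a_{n-1})$. This yields $T_\sigma=T_{\widetilde\sigma}(a_1,\ldots,a_{n-1})*a_n$ at once.

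For $\sigma\in S_n^k$ with $k<n$, $\sigma_j=n$, $\sigma_{j+1}=k$, I would first locate the block containing $n$. As $n$ is the global maximum, position $j$ is a record (so a block opens there, trivially so if $j=1$), position $j+1$ (value $k<n$) is not a record, and no position beyond $j$ can be a record; hence the terminal block of $\sigma$ is exactly $\{j,\ldots,n\}$ and equals $\ell^{(n-j+1)}(a_n,a_k,a_{\sigma_{j+2}},\ldots,a_{\sigma_n})$. Next I would check that on the positions $i<j$ (where the pair $(n,k)$ does not occur) the relabelling $v\mapsto v$ for $v<k$ and $v\mapsto v-1$ for $v>k$ is order-preserving, so it transports the records faithfully and sends each $a_{\sigma_i}$ to the $\widetilde\sigma_i$-th entry of the new tuple $(a_1,\ldots,\widehat{a_k},\ldots,a_{n-1},a_n\rhd a_k)$; thus every $\ell$-factor preceding the terminal one is unchanged. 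In $\widetilde\sigma$ the collapsed letter $n-1$ at position $j$ is the global maximum, so the terminal block of $\widetilde\sigma$ is $\{j,\ldots,n-1\}$ and reads $\ell^{(n-j)}(a_n\rhd a_k,a_{\sigma_{j+2}},\ldots,a_{\sigma_n})$, the $(n-1)$-st slot of the new tuple being $a_n\rhd a_k$ by construction. The two terminal factors then coincide by the definition of $\ell$, namely $\ell^{(m)}(c_1,c_2,c_3,\ldots,c_m)=\ell^{(m-1)}(c_1\rhd c_2,c_3,\ldots,c_m)$ with $c_1=a_n$, $c_2=a_k$, which absorbs the pair into $a_n\rhd a_k$. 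Multiplying the matching factors with $*$ gives the stated identity.

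The only genuine obstacle is bookkeeping: one must verify that the index relabelling is strictly order-preserving on the letters occurring before position $j$, so that the record set and hence the whole block decomposition are carried over faithfully, and that the last slot of the new argument tuple is exactly $a_n\rhd a_k$. Once these two combinatorial points are settled, the algebraic input reduces to the single elementary collapse of the first two entries of an $\ell$-word, and no dendriform axiom beyond the definitions of $\rhd$ and $\ell$ is required.
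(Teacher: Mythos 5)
Your proof is correct. The paper gives no proof at all — after setting up the correspondence $\sigma\mapsto\widetilde\sigma$ it declares the lemma ``almost immediate'' — and your block-by-block bookkeeping (records before position $j$ are preserved by the order-preserving relabelling, and the terminal blocks match via the definitional identity $\ell^{(m)}(c_1,c_2,c_3,\ldots,c_m)=\ell^{(m-1)}(c_1\rhd c_2,c_3,\ldots,c_m)$) is precisely the argument the authors leave implicit.
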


We rewrite the $n-1$-term sequence $(a_1,\ldots
,\widehat{a_k},\ldots, a_{n-1},a_n\rhd a_k)$ as $(c_1^k,\ldots
,c_{n-1}^k)$. We are now ready to compute, using lemma
\ref{bijections} and the induction hypothesis:
 \allowdisplaybreaks{
\begin{eqnarray*}
    \lefteqn{\sum_{\sigma\in S_n}T_\sigma(a_1,\ldots ,a_n) =
                \sum_{k=1}^n\sum_{\sigma\in S_n^k}T_\sigma(a_1,\ldots ,a_n)}\\
    &=& \sum_{\tau\in S_{n-1}}
         \Big(\big(\cdots(a_{\tau_1}\succ a_{\tau_2})\succ\cdots\big)\succ a_{\tau_{n-1}}\Big)*a_n
       +\sum_{k=1}^{n-1}\sum_{\tau\in S_{n-1}}
         \big(\cdots(c^k_{\tau_1}\succ c^k_{\tau_2})\succ\cdots\big)\succ c^k_{\tau_{n-1}}\\
    &=&\hskip -8mm \sum_{\tau\in S_{n-1}}
                    \Big(\big(\cdots(a_{\tau_1}\succ a_{\tau_2})\succ\cdots\big)\succ a_{\tau_{n-1}}\Big) \succ a_n
                  +\sum_{\tau\in S_{n-1}}
                    \Big(\big(\cdots(a_{\tau_1}\succ a_{\tau_2})\succ\cdots\big)\succ a_{\tau_{n-1}}\Big)\prec a_n\\
    &\hskip 8mm +&\sum_{k=1}^{n-1}\sum_{j=1}^{n-1} \sum_{\tau\in S_{n-1} \atop \tau_j=n-1}
    \Big(\cdots\big(\cdots(c_{\tau_1}^k\succ c_{\tau_2}^k)\succ\cdots(a_n\rhd a_k)\big)
    \succ\cdots\Big) \succ c_{\tau_{n-1}}^k,
\end{eqnarray*}}
where $a_n\rhd a_k=c^k_{\tau_j}=c^k_{n-1}$ lies in position $j$. Using the
definition of the pre-Lie operation $\rhd$ and the axiom (\ref{A3}) we get:
 \allowdisplaybreaks{
\begin{eqnarray*}
   \lefteqn{\sum_{\sigma\in S_n}T_\sigma(a_1,\ldots ,a_n)}\\
    &=& \sum_{\tau\in S_{n-1}}
    \Big(\big(\cdots(a_{\tau_1}\succ a_{\tau_2})\succ\cdots\big)\succ a_{\tau_{n-1}}\Big)\succ a_n
        +\sum_{\tau\in S_{n-1}}
    \Big(\big(\cdots(a_{\tau_1}\succ a_{\tau_2})\succ\cdots\big)\succ a_{\tau_{n-1}}\Big)\prec a_n\\
    &\; +\hskip -0.7cm&\sum_{k=1}^{n-1} \sum_{\tau\in S_{n-1} \atop \tau_1=n-1}
    \Big(\cdots\big((a_n\succ a_k)\succ c_{\tau_2}^k\big)\succ\cdots\Big)\succ c_{\tau_{n-1}}^k
    -\sum_{k=1}^{n-1} \sum_{\tau\in S_{n-1}\atop \tau_1=n-1}
    \Big(\cdots\big((a_n\prec a_k)\succ c_{\tau_2}^k\big)\succ\cdots\Big)\succ c_{\tau_{n-1}}^k\\
    &\; +\hskip -0.7cm&\sum_{k=1}^n\sum_{j=2}^{n-1}\sum_{\tau\in S_{n-1} \atop \tau_j=n-1}
    \Bigg(\cdots\bigg(\Big(\big(\cdots(c_{\tau_1}^k\succ c_{\tau_2}^k)\succ\cdots \big)\succ a_n\Big)
    \succ a_k\bigg)\succ\cdots\Bigg)\succ c_{\tau_{n-1}}^k\\
    &\; +\hskip -0.7cm&\sum_{k=1}^n\sum_{j=2}^{n-1}\sum_{\tau\in S_{n-1} \atop \tau_j=n-1}
    \Bigg(\cdots\bigg(\Big(\big(\cdots(c_{\tau_1}^k\succ c_{\tau_2}^k)\succ\cdots \big)\prec
    a_n\Big)\succ a_k\bigg)\succ\cdots\Bigg)\succ c_{\tau_{n-1}}^k\\
    &\; -\hskip -0.7cm&\sum_{k=1}^{n-1}\sum_{j=2}^{n-1} \sum_{\tau\in S_{n-1} \atop \tau_j=n-1}
    \Big(\cdots\big(\cdots(c_{\tau_1}^k\succ c_{\tau_2}^k)\succ\cdots(a_k\prec a_n)\big)\succ
    \cdots\Big)\succ c_{\tau_{n-1}}^k,
\end{eqnarray*}}
where $a_n$ lies in position $j$ (resp. $j+1$) in lines 4 and 5
(resp. in the last line) in the above computation, and where $a_k$
lies in position $j+1$ (resp. $j$) in lines 4 and 5 (resp. in the
last line). We can rewrite this going back to the permutation
group $S_n$ and using the partition (\ref{partition}):
 \allowdisplaybreaks{
\begin{eqnarray*}
    \sum_{\sigma\in S_n}T_\sigma(a_1,\ldots ,a_n)
    &=& \sum_{\sigma\in S_n^n}
        \Big(\big(\cdots(a_{\sigma_1}\succ a_{\sigma_2})\succ\cdots\big)\succ a_{\sigma_{n-1}}\Big)\succ a_{\sigma_n}\\
    &\hskip 8mm +&\sum_{\sigma\in S_n^n}
        \Big(\big(\cdots(a_{\sigma_1}\succ a_{\sigma_2})\succ\cdots\big) \succ a_{\sigma_{n-1}}\big)\prec a_{\sigma_n}\\
    &\hskip 8mm +&\sum_{k=1}^{n-1} \sum_{\sigma\in S_n^{1,k}}
        \Big(\cdots\big((a_{\sigma_1}\succ a_{\sigma_2})\succ a_{\sigma_3}\big)\succ\cdots\Big)\succ a_{\sigma_{n}}\\
    &\hskip 8mm -&\sum_{k=1}^{n-1} \sum_{\sigma\in S_n^{1,k}}
        \Big(\cdots\big((a_{\sigma_1}\prec a_{\sigma_2})\succ a_{\sigma_3}\big)\succ\cdots\Big)\succ a_{\sigma_{n}}\\
    &\hskip 8mm +&\sum_{k=1}^n\sum_{j=2}^{n-1}\sum_{\sigma\in S_n^{j,k}}
        \Bigg(\cdots\bigg(\Big(\big(\cdots(a_{\sigma_1}\succ a_{\sigma_2})\succ\cdots \big)\succ
                                    a_{\sigma_j}\Big)\succ a_{\sigma_{j+1}}\bigg)\succ\cdots\Bigg)\succ a_{\sigma_n}\\
    &\hskip 8mm +&\sum_{k=1}^n\sum_{j=2}^{n-1}\sum_{\sigma\in S_n^{j,k}}
        \Bigg(\cdots\bigg(\Big(\big(\cdots(a_{\sigma_1}\succ a_{\sigma_2})\succ\cdots \big)\prec
                                    a_{\sigma_j}\Big)\succ a_{\sigma_{j+1}}\bigg)\succ\cdots\Bigg)\succ a_{\sigma_n}\\
    &\hskip 8mm -&\sum_{k=1}^{n-1}\sum_{j=2}^{n-1} \sum_{\sigma\in S_n^{j,k}}
        \Big(\cdots\big(\cdots(a_{\sigma_1}\succ a_{\sigma_2})\succ\cdots(a_{\sigma_{j+1}}\prec
                                                                a_{\sigma_j})\big)\succ\cdots\Big)\succ a_{\sigma_n}.
\end{eqnarray*}}
Lines 1,3 and 5 together give the left-hand side of (\ref{eq:main}) whereas lines
2, 4, 6 and 7 cancel. More precisely line 2 cancels with the partial sum
corresponding to $j=n-1$ in line 7, line 4 cancels with the partial sum
corresponding to $j=2$ in line 6, and (for $n\ge 4$), the partial sum
corresponding to some fixed $j\in\{3,\ldots ,n-1\}$ in line 6 cancels with the
partial sum corresponding to $j-1$ in line 7. This proves equality
(\ref{eq:main}).

\smallskip

We could prove {\sl mutatis mutandis\/} (\ref{eq:bismain}) exactly
along the same lines, but we can show that the two versions are in
fact equivalent: The term $T_\sigma^{\succeq}(a_1,\ldots ,a_n)$ is
defined the same way as $T_\sigma(a_1,\ldots ,a_n)$ has been
defined before, but with the dendriform operation $\succeq$
instead of $\succ$.
\begin{lem}
\label{lem:TU} For any $\sigma\in S_n$ and for any $a_1,\ldots ,
a_n\in A$ we have:
\begin{equation}
\label{TU}
    U_{\sigma}(a_1,\ldots,a_n)=(-1)^{n-1}T^{\succeq}_{\omega\sigma\omega}(a_n,\ldots,a_1).
\end{equation}
\end{lem}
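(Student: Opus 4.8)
The plan is to unwind both sides of equation~(\ref{TU}) using the definitions of $U_\sigma$, $T^{\succeq}_\tau$, and the relations~(\ref{twostars}) and~(\ref{twoprelie}) relating the two dendriform structures. I would first observe that the statistic sets $E_\tau$ and $F_\sigma$ are exchanged under the conjugation-and-reversal $\sigma\mapsto\omega\sigma\omega$ (where $\omega=n\ldots 1$ and we also reverse the argument list $a_1,\ldots,a_n\mapsto a_n,\ldots,a_1$). Concretely, the condition $l\in F_\sigma$ (namely $\sigma_l<\sigma_j$ for all $j\ge l+1$) should translate, under $\tau=\omega\sigma\omega$, precisely into the condition $n-l\in E_\tau$ (namely $\tau_{m+1}>\tau_j$ for all $j\le m$, with $m=n-l$). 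So the first key step is a purely combinatorial lemma: the descent-type statistic $F_\sigma$ written in increasing order $l_1<\cdots<l_q$ corresponds, block by block, to the ascent-type statistic $E_{\omega\sigma\omega}$, and the consecutive blocks of $\sigma$ cut out by $F_\sigma$ map to the consecutive blocks of $\omega\sigma\omega$ cut out by $E_{\omega\sigma\omega}$, but listed in reverse order.

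Next I would match the individual factors. Each factor of $U_\sigma$ is an $r^{(m)}$-word built from $\lhd$, while each factor of $T^{\succeq}$ is an $\ell^{(m)}$-word built from the pre-Lie operation associated with $\succeq$. By~(\ref{twoprelie}) we have $a\lhd b=-b\rhd a$, so iterating, $r^{(m)}(b_1,\ldots,b_m)=b_1\lhd(b_2\lhd\cdots)$ should equal $(-1)^{m-1}\ell^{(m)}(b_m,\ldots,b_1)$, because each of the $m-1$ applications of $\lhd$ contributes a sign and reverses the nesting from right-associated $\lhd$ to left-associated $\rhd$. One checks this by a short induction on $m$ using~(\ref{def:pre-LieWords}): assuming the claim for $m-1$, write $r^{(m)}(b_1,\ldots,b_m)=b_1\lhd r^{(m-1)}(b_2,\ldots,b_m)$, apply~(\ref{twoprelie}) to pull $b_1$ to the right with a sign, and fold it into the left-associated $\ell$-word. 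Here one must be careful that the pre-Lie operation appearing in $T^{\succeq}$ is the $\rhd$ attached to the $\succeq$-structure, which by the remark after~(\ref{twoprelie}) is the \emph{same} $\rhd$ as for the original structure; thus the $\ell^{(m)}$ built from $\succeq$ literally coincides with the $\ell^{(m)}$ built from $\prec,\succ$, and no further sign intervenes at this stage.

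Finally I would assemble the product. With $q$ blocks of sizes $m_1,\ldots,m_q$ summing to $n$, the total reversal sign from all factors is $(-1)^{\sum_i(m_i-1)}=(-1)^{n-q}$, and the block-order reversal replaces the associative product $*$ by the opposite product $\ostar$; but by~(\ref{twostars}) we have $a\ostar b=-b*a$, so reassociating $q$ factors in the opposite order under $\ostar$ and converting back to $*$ produces an additional sign $(-1)^{q-1}$. Multiplying, $(-1)^{n-q}\cdot(-1)^{q-1}=(-1)^{n-1}$, which is exactly the sign in~(\ref{TU}). The main obstacle I anticipate is bookkeeping: one must verify that the block \emph{order} reversal together with the argument-list reversal $a_i\mapsto a_{n+1-i}$ correctly lines up the $j$-th $r$-factor of $U_\sigma$ with the $(q+1-j)$-th $\ell$-factor of $T^{\succeq}_{\omega\sigma\omega}$, including matching the arguments $a_{\sigma_\bullet}$ against $a_{(\omega\sigma\omega)_\bullet}$; tracking these indices precisely, rather than any single algebraic identity, is where the care is needed, but it is entirely mechanical once the combinatorial correspondence $F_\sigma\leftrightarrow E_{\omega\sigma\omega}$ is established.
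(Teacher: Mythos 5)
Your proposal is correct and follows essentially the same route as the paper's proof: the combinatorial correspondence you describe is exactly the paper's symmetry $E_{\omega\sigma\omega}=n-F_\sigma$, the factor-by-factor reversal $r^{(m)}(b_1,\ldots,b_m)=(-1)^{m-1}\ell^{(m)}(b_m,\ldots,b_1)$ is the paper's application of $a\lhd b=-b\rhd a$, and the block-order reversal via $a\ostar b=-b*a$ yields the same sign bookkeeping $(-1)^{n-q}(-1)^{q-1}=(-1)^{n-1}$. The only difference is organizational: you isolate the three ingredients as separate steps, whereas the paper carries them out in a single chain of equalities.
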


\begin{proof}
We denote by $\omega$ the permutation $(n \cdots 21)$ in $S_n$,
and we set $b_j=a_{\omega_j}$, hence:
\begin{equation*}
    (b_1,\ldots ,b_n):=(a_n,\ldots, a_1).
\end{equation*}
 Using
 (\ref{twostars}), (\ref{twoprelie}) and the symmetry:
\begin{equation}
    E_{\omega\sigma\omega}=n-F_\sigma \hbox{ for any }\sigma\in S_n,
\end{equation}
we compute:
 \allowdisplaybreaks{
\begin{eqnarray*}
    \lefteqn{U_{\sigma}(a_1,\ldots,a_n)
    =\Big(a_{\sigma_1}\lhd\big(\cdots (a_{\sigma_{l_1-1}}\lhd a_{\sigma_{l_1}})\big)\cdots\Big)*\cdots
    *\Big(a_{\sigma_{k_q+1}}\lhd\big(\cdots (a_{\sigma_{n-1}}\lhd a_{\sigma_n})\big)\cdots\Big)}\\
    &=&(-1)^{n-1}\Big(\cdots\big((a_{\sigma_n}\rhd a_{\sigma_{n-1}})\rhd\cdots\big)\rhd
            a_{\sigma_{n-k_1-1}}\Big)\ostar \cdots \ostar\Big(\cdots\big((a_{\sigma_{n-k_p}}\rhd
            a_{\sigma_{n-k_p-1}})\rhd \cdots\big)\rhd a_{\sigma_{1}}\Big)\\
    &=&(-1)^{n-1}\Big(\cdots\big((a_{(\sigma\omega)_1}\rhd a_{(\sigma\omega)_2})\rhd\cdots\big)\rhd
            a_{(\sigma\omega)_{k_1}}\Big)\ostar\cdots \ostar\Big(\cdots\big((a_{(\sigma\omega)_{k_p+1}}\rhd
            a_{(\sigma\omega)_{k_p+2}})\rhd \cdots\big)\rhd a_{(\sigma\omega)_{n}}\Big)\\
    &=&(-1)^{n-1}\Big(\cdots\big((b_{(\omega\sigma\omega)_1}\rhd b_{(\omega\sigma\omega)_2})\rhd\cdots\big)\rhd
            b_{(\omega\sigma\omega)_{k_1}}\Big)\ostar\cdots \ostar\Big(\cdots\big((b_{(\omega\sigma\omega)_{k_p+1}}\rhd
            b_{(\omega\sigma\omega)_{k_p+2}})\rhd \cdots\big)\rhd b_{(\omega\sigma\omega)_{n}}\Big)\\
    &=&(-1)^{n-1}T^{\succeq}_{\omega\sigma\omega}(b_1,\ldots ,b_n).
\end{eqnarray*}}
\end{proof}
Hence we compute, using successively the $S_n$-invariance,
equation (\ref{eq:main}) and lemma \ref{lem:TU}:
 \allowdisplaybreaks{
\begin{eqnarray*}
 \sum_{S_n}a_{\sigma_1}\prec\big(\cdots\prec(a_{\sigma_{n-1}}\prec a_{\sigma_n})\cdots\big)
 &=&\sum_{S_n}b_{(\omega\sigma)_1}\prec\big(\cdots\prec(b_{(\omega\sigma)_{n-1}}\prec b_{(\omega\sigma)_n})\cdots\big)\\
 &=&(-1)^{n-1}\sum_{S_n}\big(\cdots(b_{(\omega\sigma)_n}\succeq b_{(\omega\sigma)_{n-1}})\succeq\cdots\big)\succeq b_{(\omega\sigma)_1}\\
 &=&(-1)^{n-1}\sum_{S_n}T^{\succeq}_{\omega\sigma\omega}(b_1,\ldots ,b_n)\\
 &=&\sum_{S_n}U_\sigma(a_1,\ldots,a_n),
\end{eqnarray*}}
which finishes the proof of the theorem.


\section{Rota--Baxter algebras and dendriform algebras}
\label{RB-Dendriform-Lam}

Recall \cite{E} that an associative Rota--Baxter
algebra (over a field $k$) is an associative algebra $(A,.)$
endowed with a linear map $R:A\to A$ subject to the following
relation:
\begin{equation}\label{RB}
R(a)R(b)=R\big(R(a)b+aR(b)+\theta ab\big).
\end{equation}
where $\theta\in k$. The map $R$ is called a {\sl Rota--Baxter
operator of weight $\theta$\/}. The map $\tilde{R}:=-\theta id -R$
also is a weight $\theta$ Rota--Baxter map.

\begin{prop}\cite {E}
Any Rota--Baxter algebra gives rise to two dendriform dialgebra
structures given by:
 \allowdisplaybreaks{
\begin{eqnarray}
    a\prec b&:=&aR(b)+\theta ab=-a\tilde{R}(b),\hskip 8mm a\succ b:=R(a)b,\\
    a\prec' b&:=&aR(b),\hskip 33mm a\succ' b:=R(a)b+\theta ab=-\tilde{R}(a)b.
\end{eqnarray}}
\end{prop}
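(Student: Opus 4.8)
The plan is to verify the three dendriform axioms (\ref{A1})--(\ref{A3}) for each pair of operations, the entire computation being driven by a single reformulation of the Rota--Baxter relation. For the first structure, the associated associative product is
\[
 a * b = a\prec b + a\succ b = R(a)b + aR(b) + \theta ab,
\]
which is precisely the argument appearing on the right-hand side of (\ref{RB}). Hence the defining relation (\ref{RB}) reads simply $R(a)R(b) = R(a*b)$, and this compact identity is the only nontrivial input beyond associativity of the underlying product.

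With this in hand I would dispatch the axioms in increasing order of effort. Axiom (\ref{A2}) is purely formal: both $(a\succ b)\prec c$ and $a\succ(b\prec c)$ expand to $R(a)\,b\,R(c) + \theta\,R(a)bc$ using only bilinearity and associativity, so it holds for any linear map $R$ whatsoever. Axiom (\ref{A3}) is a one-line consequence of the reformulated relation: $a\succ(b\succ c) = R(a)R(b)c = R(a*b)c = (a*b)\succ c$. The one place that calls for genuine bookkeeping, and hence the main (if mild) obstacle, is (\ref{A1}): expanding $(a\prec b)\prec c$ and $a\prec(b*c)$ and substituting $R(b*c) = R(b)R(c)$, one must check that the terms on each side --- in particular the weight-$\theta$ cross terms $\theta\,abR(c)$ and $\theta\,aR(b)c$ together with the weight-$\theta^2$ term $\theta^2 abc$ --- match up. They do, and this completes the verification for $(\prec,\succ)$.

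For the second (primed) pair I would avoid repeating the calculation by reducing it to the first case. As recorded just before the statement, $\tilde{R} = -\theta\,\id - R$ is again a Rota--Baxter operator of weight $\theta$, so the first construction applied to $\tilde{R}$ yields a dendriform structure $(\prec_{\tilde{R}}, \succ_{\tilde{R}})$ with $a\prec_{\tilde{R}} b = a\tilde{R}(b) + \theta ab$ and $a\succ_{\tilde{R}} b = \tilde{R}(a)b$. Substituting $\tilde{R} = -\theta\,\id - R$ shows at once that $a\prec_{\tilde{R}} b = -aR(b)$ and $a\succ_{\tilde{R}} b = -\bigl(R(a)b + \theta ab\bigr)$; that is, the primed operations are exactly the negatives of these, $\prec' = -\prec_{\tilde{R}}$ and $\succ' = -\succ_{\tilde{R}}$.

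It then suffices to observe that rescaling both dendriform operations by a common scalar $\lambda$ preserves all three axioms: each of (\ref{A1})--(\ref{A3}) is an identity between terms homogeneous of degree two in the operations (every term is a composite of two of $\prec,\succ,*$), so both sides scale by $\lambda^2$ and the relation is unaffected. Taking $\lambda = -1$ shows that $(\prec',\succ')$ is dendriform because $(\prec_{\tilde{R}}, \succ_{\tilde{R}})$ is, which finishes the proof. Alternatively, one may verify (\ref{A1})--(\ref{A3}) directly for the primed pair by the same argument as above, noting that its associated product $a\prec' b + a\succ' b$ again equals $R(a)b + aR(b) + \theta ab$, so that the identity $R(a)R(b)=R(a*b)$ applies verbatim.
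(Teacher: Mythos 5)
Your proof is correct. Note first that the paper itself gives no proof of this proposition: it is recalled from the reference [E], where the statement is established by directly checking the axioms (\ref{A1})--(\ref{A3}) for both pairs of operations. Your verification of the first structure is exactly that standard check, organized efficiently around the reformulation $R(a)R(b)=R(a*b)$: (\ref{A2}) needs nothing but associativity, (\ref{A3}) is the one-line use of the relation, and (\ref{A1}) is the only place where the $\theta$- and $\theta^2$-terms must be matched, which they are. Where you genuinely depart from the direct route is the treatment of the primed pair: instead of redoing the three axioms (where the roles of (\ref{A1}) and (\ref{A3}) are mirrored), you apply the first construction to the Rota--Baxter operator $\tilde{R}=-\theta\,\mathrm{id}-R$, observe that $\prec'$ and $\succ'$ are the negatives of the resulting operations, and invoke the fact that the dendriform axioms are homogeneous of degree two in the pair of operations, hence stable under simultaneous rescaling by $\lambda=-1$. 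This reduction is sound (the scaling argument is valid since $*$ scales by the same $\lambda$, so each side of each axiom scales by $\lambda^2$), and it buys a conceptual explanation of why two structures arise --- they come from the two weight-$\theta$ operators $R$ and $\tilde{R}$ --- rather than treating the second structure as an unrelated coincidence requiring a second computation.
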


The associated associative product $*$ is given for both
structures by $a*b=aR(b)+R(a)b+\theta ab$. It is sometimes called
the ``double Rota--Baxter product'', and verifies:
\begin{equation}
    R(a*b)=R(a)R(b),
\end{equation}
which is just a reformulation of the Rota--Baxter relation
(\ref{RB}).
\begin{rmk}\cite {E}
In fact, by splitting again the binary operation $\prec$ (or
alternatively $\succ'$), any Rota--Baxter algebra is {\rm
tri-dendriform\/}, in the sense that the Rota--Baxter structure
yields three binary operations $<,\diamond,
>$ subject to axioms refining the axioms of dendriform dialgebras~\cite{LodayRonco04}.
The three binary operations are defined by $a<b=aR(b)$, $a\diamond
b=\theta ab$ and $a>b=R(a)b$. Choosing to put the operation
$\diamond$ to the $<$ or $>$ side gives rise to the two dendriform
structures above.
\end{rmk}

Theorem \ref{main} in the Rota--Baxter setting thus takes the
following form:

\begin{cor}\label{cor:RB-Dend-Lam}
Let $(A,R)$ be a weight $\theta$ Rota--Baxter algebra, let $*$ be
the double Rota--Baxter product defined above. Then, with the
notations of section \ref{formulation} we have:
 \allowdisplaybreaks{
\begin{eqnarray}
    \sum_{\sigma\in S_n}
    R\Big(\cdots R\big(R(a_{\sigma_1})a_{\sigma_2}\big)\cdots a_{\sigma_{n-1}}\Big)a_{\sigma_n}
    &=&\sum_{\sigma\in S_n}
        T_\sigma(a_1,\ldots ,a_n),
        \label{eq:mainRB1}\\
    \sum_{\sigma\in S_n}a_{\sigma_1}
    R\Big(a_{\sigma_2}\cdots R\big(a_{\sigma_{n-1}}R(a_{\sigma_n})\big)\cdots\Big)
    &=&\sum_{\sigma\in S_n}U'_\sigma(a_1,\ldots,a_n),
\label{eq:mainRB2}
\end{eqnarray}}
where $U'_\sigma(a_1,\ldots,a_n)$ is defined the same way as
$U_\sigma(a_1,\ldots,a_n)$ previously, but with the dendriform
structure $(A,\prec',\succ')$. The pre-Lie operation $\rhd$ (resp.
$\lhd'$) involved in the right-hand side of equality
(\ref{eq:mainRB1})(resp. (\ref{eq:mainRB2})) is given by:
\begin{equation}
\label{RBpreLie}
    a\rhd b=R(a)b-bR(a)-\theta ba = [R(a),b]-\theta ba,
    \quad\hbox{ {\rm{resp.}} }\quad
    a \lhd' b=aR(b)-R(b)a-\theta ba = [a,R(b)]-\theta ba.
\end{equation}
\end{cor}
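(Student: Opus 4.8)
The plan is to obtain the Corollary as a direct specialization of Theorem~\ref{main}, translating the abstract dendriform operations into Rota--Baxter data via the Proposition above. The only genuine content is bookkeeping: I must verify that the left-hand sides and the pre-Lie operations appearing in Theorem~\ref{main} take the claimed concrete form, and---crucially---observe that the two identities require two \emph{different} dendriform structures attached to $(A,R)$.

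For~(\ref{eq:mainRB1}) I would equip $A$ with the first dendriform structure of the Proposition, namely $a\prec b=aR(b)+\theta ab$ and $a\succ b=R(a)b$. Then the left-nested $\succ$-product on the left-hand side of~(\ref{eq:main}) unwinds by a trivial induction: $a_{\sigma_1}\succ a_{\sigma_2}=R(a_{\sigma_1})a_{\sigma_2}$, and applying $R(-)\,a_{\sigma_j}$ repeatedly yields exactly $R(\cdots R(R(a_{\sigma_1})a_{\sigma_2})\cdots a_{\sigma_{n-1}})a_{\sigma_n}$. The associated left pre-Lie product is then read off directly from its definition~(\ref{def:prelie}): $a\rhd b=a\succ b-b\prec a=R(a)b-bR(a)-\theta ba=[R(a),b]-\theta ba$, which is the first formula in~(\ref{RBpreLie}). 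Substituting this $\rhd$ into the definition~(\ref{tsigma}) of $T_\sigma$ produces the right-hand side of~(\ref{eq:mainRB1}) verbatim.

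For~(\ref{eq:mainRB2}) the key point is that one must \emph{switch} to the second structure $a\prec' b=aR(b)$, $a\succ' b=R(a)b+\theta ab$. Only with $\prec'$ (and not $\prec$) does the right-nested product on the left-hand side of~(\ref{eq:bismain}) collapse cleanly: $a_{\sigma_{n-1}}\prec' a_{\sigma_n}=a_{\sigma_{n-1}}R(a_{\sigma_n})$, and nesting on the left gives $a_{\sigma_1}R(a_{\sigma_2}\cdots R(a_{\sigma_{n-1}}R(a_{\sigma_n}))\cdots)$, the left-hand side of~(\ref{eq:mainRB2}). The corresponding right pre-Lie operation is $a\lhd' b=a\prec' b-b\succ' a=aR(b)-R(b)a-\theta ba=[a,R(b)]-\theta ba$, matching~(\ref{RBpreLie}), and feeding it into~(\ref{usigma}) yields $U'_\sigma$ and hence the right-hand side. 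Since both dendriform structures share the same associated associative product $a*b=aR(b)+R(a)b+\theta ab$, the $*$'s occurring inside $T_\sigma$ and $U'_\sigma$ are the common double Rota--Baxter product, so no ambiguity arises there.

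I expect no serious obstacle: the argument is a substitution resting entirely on Theorem~\ref{main} and the Proposition. The one point demanding care---and the only place it could go wrong---is remembering that~(\ref{eq:mainRB2}) is \emph{not} the image of~(\ref{eq:bismain}) under the first structure, whose $\prec$ carries an extra $\theta ab$ term that would spoil the nested-integral form, but rather under the primed structure; the notation $U'_\sigma$ in the statement is precisely the reminder of this switch.
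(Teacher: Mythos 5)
Your proposal is correct and is essentially the paper's own (largely implicit) argument: the corollary is stated as the direct specialization of Theorem~\ref{main} to the two dendriform structures coming from the Rota--Baxter operator, exactly as you carry out. You also correctly pinpoint the one subtlety the paper encodes only in the notation $U'_\sigma$, namely that~(\ref{eq:mainRB1}) uses the unprimed structure $(\prec,\succ)$ while~(\ref{eq:mainRB2}) requires the primed structure $(\prec',\succ')$, whose $\prec'$ is free of the weight term and hence collapses to the nested form on the left-hand side.
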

Applying the Rota--Baxter operator $R$ to both sides of these two
identities gives back the noncommutative Bohnenblust--Spitzer
identity which has been announced in \cite{EGP07} and proved in
\cite{EMP07} (Theorem 7.1). What we have obtained in theorem
\ref{main} is thus an extension of this noncommutative
Bohnenblust--Spitzer identity to the dendriform setting.\\

In the weight $\theta=0$ case, the pre-Lie operation reduces to
$a\rhd b=[R(a),b]=- b \lhd' a=-b\lhd a$. This case, in the form
(\ref{eq:mainRB2}), has been handled by C.S.~Lam in~\cite{Lam}, in
the concrete situation when $A$ is a function space on the real
line, and when $R(f)$ is the primitive of $f$ which vanishes at a
fixed $T\in{\mathbb R}$. The formulation of theorem~\ref{main} in
the general dendriform setting thus permits an application to
Rota--Baxter operators of any weight $\theta$.

In the particular case of a commutative Rota--Baxter algebra the
identities in corollary~\ref{cor:RB-Dend-Lam} reduce to one since
both Rota--Baxter pre-Lie products~(\ref{RBpreLie}) agree. One
recovers the classical Spitzer identity of fluctuation theory, and
Rota's generalization thereof to arbitrary commutative
Rota--Baxter algebras \cite{Rota,RS72}.

\vspace{0.5cm}

\textbf{Acknowledgements}

\smallskip

The first named author acknowledges greatly the support by the
European Post-Doctoral Institute. The present work received
support from the ANR grant AHBE 05-42234.

\medskip

\end{document}